\font\bbbld=msbm10 scaled\magstephalf
\newcommand{\bfH}{\hbox{\bbbld H}}
\newcommand{\bfR}{\hbox{\bbbld R}}
\newcommand{\vx}{{\bf x}}
\newcommand{\e}{\varepsilon}
\newcommand{\goto}{\rightarrow}
\newcommand{\ol}{\overline}
\newcommand{\be}{\begin{equation}}
\newcommand{\ee}{\end{equation}}
\newcommand{\tD}{{\tilde D}}
\newcommand{\tg}{{\tilde g}}
\newcommand{\thh}{{\tilde h}}
\newcommand{\tkappa}{{\tilde \kappa}}
\newcommand{\tnabla}{\tilde{\nabla}}
\newtheorem{theorem}{Theorem}[section]
\newtheorem{lemma}[theorem]{Lemma}
\newtheorem{proposition}[theorem]{Proposition}
\newtheorem{corollary}[theorem]{Corollary}
\theoremstyle{definition}
\newtheorem{example}[theorem]{Example}
\theoremstyle{remark}
\newtheorem{remark}[theorem]{Remark}
\numberwithin{equation}{section}
\begin{document}
\setlength{\baselineskip}{1.2\baselineskip}

\
\title[Hypersurfaces of Constant Curvature]
{%Complete
Hypersurfaces of Constant Curvature in Hyperbolic Space.}
%with Asymptotic Boundary at Infinity}
\author{Bo Guan}
\address{Department of Mathematics, Ohio State University,
         Columbus, OH 43210}
\email{guan@math.osu.edu}
\author{Joel Spruck}
\address{Department of Mathematics, Johns Hopkins University,
 Baltimore, MD 21218}
\email{js@math.jhu.edu}
\thanks{Research of both authors was supported in part
by NSF grants.}
%2000 {\em Mathematical Subject Classification}. Primary 53C21,
%Secondary 35J65, 58J32.}

\begin{abstract}

%\vskip 4.5mm
\noindent
We show that for a very general and natural class of curvature functions,
the problem of finding a complete strictly convex hypersurface in $\bfH^{n+1}$ satisfying
$f(\kappa)=\sigma \in (0,1)$ with a prescribed asymptotic boundary $\Gamma$ at infinity
has at least one solution which is a ``vertical graph'' over the interior (or the exterior)
of  $\Gamma$. There is uniqueness for a certain subclass of these curvature functions which
includes the curvature quotients $(\frac{H_n}{H_l})^{\frac1{n-l}}$, $l-2,$ or $l-1$.
For smooth simple $\Gamma$, as $\sigma$ varies between 0 and 1,  these hypersurfaces foliate
the two components of  the complement of the hyperbolic convex hull of $\Gamma$.
\end{abstract}

\maketitle

\section{Introduction}
\label{sec1}
\setcounter{equation}{0}

In this paper we return to our earlier study \cite{GSS} of complete locally
strictly convex hypersurfaces
of constant  curvature in hyperbolic space $\bfH^{n+1}$ with a
prescribed asymptotic boundary at infinity. Given $\Gamma \subset
\partial_{\infty} \bfH^{n+1}$  and a smooth symmetric function $f$ of
$n$ variables, we seek a complete hypersurface $\Sigma$ in
$\bfH^{n+1}$ satisfying
\begin{equation}
\label{eq1.10}
f(\kappa[\Sigma]) = \sigma
\ee
\be \label{eq1.20}
\partial \Sigma = \Gamma
\end{equation}
where $\kappa[\Sigma] = (\kappa_1, \dots, \kappa_n)$
denotes the {\em positive} hyperbolic principal curvatures of $\Sigma$ and $\sigma \in (0,1)$
is a constant.

We will use the half-space model,
\[ \bfH^{n+1} = \{(x, x_{n+1}) \in \bfR^{n+1}: x_{n+1} > 0\} \]
equipped with the hyperbolic metric
\begin{equation}
\label{eq1.30}
 ds^2 = \frac{\sum_{i=1}^{n+1}dx_i^2}{x_{n+1}^2}.
\end{equation}
Thus $\partial_\infty \bfH^{n+1}$ is naturally identified with
$\bfR^n = \bfR^n \times \{0\} \subset \bfR^{n+1}$ and (\ref{eq1.20}) may
be understood in the Euclidean sense. For convenience we say $\Sigma$ has
compact asymptotic boundary if
$\partial \Sigma \subset \partial_\infty \bfH^{n+1}$ is compact with respect
to the Euclidean metric in $\bfR^n$.

The function $f$ is assumed to satisfy the fundamental structure
conditions in
\begin{equation}
\label{1.40}
K^+_n := \big\{\lambda \in \bfR^n:
   \mbox{each component $\lambda_i > 0$}\big\}:
 \end{equation}

\begin{equation}
\label{eq1.50}
f_i (\lambda) \equiv \frac{\partial f (\lambda)}{\partial \lambda_i} > 0
  \;\; \mbox{in $K^+_n$}, \;\; 1 \leq i \leq n,
\end{equation}
\begin{equation}
\label{eq1.60}
\mbox{$f$ is a concave function in $K^+_n$},
\end{equation}
and
\begin{equation}
\label{eq1.70}
 f > 0 \;\;\mbox{in $K^+_n$},
  \;\; f = 0 \;\;\mbox{on $\partial K^+_n$}
\end{equation}
 In addition, we shall assume that $f$ is normalized
\begin{equation}
\label{eq1.80}
f(1, \dots, 1) = 1
\end{equation}
and satisfies the following more technical assumptions
\begin{equation}
\label{eq1.90}
\mbox{ $f$ is homogeneous of degree one}
\end{equation}
and
\begin{equation}
\label{eq1.100}
\lim_{R \rightarrow + \infty}
   f (\lambda_1, \cdots, \lambda_{n-1}, \lambda_n + R)
    \geq 1 + \varepsilon_0 \;\;\;
\mbox{uniformly in $B_{\delta_0} ({\bf 1})$}
%f (\lambda_1, \cdots, \lambda_{n-1}, \lambda_n + R) \geq C,
%       \;\; \forall \; \lambda \in E
\end{equation}
for some fixed $\varepsilon_0 > 0$ and $\delta_0 > 0$,
where $B_{\delta_0} ({\bf 1})$ is the ball
of radius $\delta_0$ centered at ${\bf 1} = (1, \dots, 1) \in \bfR^n$.

All these assumptions are satisfied by $f =
(H_n/H_l)^{\frac{1}{n-l}}$, $0 \leq l <  n$, %defined in $K$
where $H_l$ is the normalized $l$-th elementary
symmetric polynomial ($H_0 =1$, $H_1=H$ and $H_n=K$ the
mean and extrinsic Gauss curvatures, respectively).
See \cite{CNS3} for proof of (\ref{eq1.50}) and (\ref{eq1.60}).
For  (\ref{eq1.100}) one easily computes that
\[\lim_{R \rightarrow + \infty}
   f (\lambda_1, \cdots, \lambda_{n-1}, \lambda_n + R)
  = \Big(\frac{n}l\Big)^{\frac1{n-l}}.\]
  Moreover, if
$g^{k},\, k=1,\ldots N$ satisfy (\ref{eq1.50})-(\ref{eq1.100}),
then so does the ``concave sum'' $f=\sum_{k=1}^N\alpha_k g^k$ or
``concave product'' $f=\Pi_{k=1}^N (g^k)^{\alpha_k}$ where
$\alpha_k>0,\, \sum_{k=1}^N \alpha_k=1$.\\

  Since $f$ is symmetric, by (\ref{eq1.60}),
(\ref{eq1.80}) and (\ref{eq1.90}) we have
\begin{equation}
\label{eq1.110}
f (\lambda) \leq f ({\bf 1}) + \sum f_i ({\bf 1}) (\lambda_i - 1)
= \sum f_i ({\bf 1}) \lambda_i  = \frac{1}{n} \sum \lambda_i
\;\;\mbox{in $K^+_n$}
\end{equation}
and
\begin{equation}
\label{eq1.120}
 \sum f_i (\lambda) = f (\lambda) + \sum f_i (\lambda) (1 - \lambda_i)
\geq f ({\bf 1}) = 1 \;\;\mbox{in $K^+_n$}.
\end{equation}

 In this paper all
hypersurfaces in $\bfH^{n+1}$ we consider are assumed to be
connected and orientable. If $\Sigma$ is a complete hypersurface in
$\bfH^{n+1}$ with compact asymptotic boundary at infinity, then the
normal vector field of $\Sigma$ is chosen to be the one pointing to
the unique unbounded region in $\bfR^{n+1}_+ \setminus \Sigma$, and
the (both hyperbolic and Euclidean) principal curvatures of $\Sigma$
are calculated with respect to this normal vector field.

As in our earlier work \cite{RS94, NS96, GS00, GSS, GS08}, we will take
 $\Gamma=\partial \Omega$ where
$\Omega \subset \bfR^n$ is a smooth domain and seek $\Sigma$ as  the
graph of  a function $u(x)$ over $\Omega$, i.e.
  \[\Sigma=\{(x,x_{n+1}): x\in \Omega,~ x_{n+1}=u(x)\}.\]
 Then  the coordinate vector fields and upper unit normal are  given by
 \[X_i=e_i+u_i e_{n+1},~{\bf n}=u\nu=u\frac{(-u_i e_i+e_{n+1})}{w},\]
 where $ w=\sqrt{1+|\nabla u|^2}$.
 The first fundamental form $g_{ij}$ is then given by
  \begin{equation}
\label{eq1.130}
 g_{ij} = \langle X_i,X_j \rangle
        = \frac1{u^2}(\delta_{ij} + u_i u_j)=\frac{g^e_{ij}}{u^2}~.
\end{equation}
To compute the second fundamental form $h_{ij}$ we use
\be \Gamma_{ij}^k=\frac1{x_{n+1}}\{-\delta_{jk}\delta_{i n+1}
          -\delta_{ik}\delta_{j n+1}+\delta_{ij}\delta_{k n+1}\}
\ee
to obtain
\be
\nabla_{X_i}X_j
 =(\frac{\delta_{ij}}{x_{n+1}}+u_{ij}-\frac{u_i u_j}{x_{n+1}})e_{n+1}
   -\frac{u_j e_i+u_i e_j}{x_{n+1}}~.
\ee Then
\begin{equation}
\label{eq1.140}
\begin{aligned}
 h_{ij} & =  \, \langle \nabla_{X_i}X_j,u\nu \rangle =
  \frac1{uw}(\frac{\delta_{ij}}u+ u_{ij}-\frac{u_i u_j}u+2\frac{u_i u_j}u)\\
        & =  \, \frac1{u^2 w}{(\delta_{ij}+u_i u_j +u u_{ij})}
          =  \, \frac{h^e_{ij}}{u}+\frac{\nu^{n+1}}{u^2}g^e_{ij}.
 \end{aligned}
 \end{equation}
The hyperbolic principal curvatures $\kappa_i$ of $\Sigma$ are the
roots of the characteristic equation
\[\det(h_{ij}-\kappa g_{ij})
    =u^{-n}\det(h^e_{ij}-\frac1u(\kappa-\frac1w)g^e_{ij})=0.\]
Therefore,
 \be
 \label{eq1.150}
 \kappa_i=u\kappa^e_i +\nu^{n+1}.
 \ee

The relations \eqref{eq1.140} and \eqref{eq1.150}  are easily seen
to hold for parametric hypersurfaces.\\

One beautiful consequence of \eqref{eq1.140} is the following result of \cite{GSS}.
\begin{theorem}
\label{th1.0}
Let $\Sigma$ be a complete locally strictly convex $C^2$ hypersurface in
$\bfH^{n+1}$ with compact asymptotic boundary at infinity.
Then $\Sigma$ is the (vertical) graph of a function
$u \in C^2 (\Omega) \cap C^0 (\ol{\Omega})$, $u > 0$ in $\Omega$
and  $u = 0$ on $\ol{\Omega}$, for some domain $\Omega \subset \bfR^n$:
\[ \Sigma = \big\{(x, u (x)) \in \bfR^{n+1}_+: x \in \Omega\big\} \]
such that
\begin{equation}
\label{eq1.160}
 \{\delta_{ij} + u_i u_j + u u_{ij}\} > 0 \;\; \mbox{in $\Omega$.}
\end{equation}
That is, the function $u^2 + |x|^2$ is strictly convex.
%\begin{equation}
%\label{gsz-I60'}
%|Du| \leq
%   \max_{\partial \Omega} |Du| \;\;\; \mbox{in $\Omega$}.
%\end{equation}
\end{theorem}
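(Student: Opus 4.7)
The plan is to split the theorem into a geometric step (establishing the vertical graph structure) and a short algebraic step (identifying strict hyperbolic convexity with \eqref{eq1.160}). The real content is in the first step; once $\Sigma$ is known to be a graph, everything follows directly from \eqref{eq1.140}.

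For the graph property, I would first invoke the classical theory of complete locally convex hypersurfaces (of Van Heijenoort type) to conclude that $\Sigma$ is globally embedded and bounds a convex region $\hat\Omega \subset \bfH^{n+1}$. Compactness of $\Gamma$ then gives $\partial_\infty \hat\Omega = \overline\Omega$ for a bounded domain $\Omega \subset \bfR^n$ with $\partial\Omega = \Gamma$. To upgrade this to a vertical graph, I would show that $\nu^{n+1} > 0$ at every interior point of $\Sigma$, where $\nu$ is the upward Euclidean unit normal. If $\nu^{n+1}$ vanished at an interior $p^\star$, the tangent plane there would be a vertical Euclidean hyperplane $P$, which is a totally geodesic hyperbolic hyperplane. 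Convexity of $\hat\Omega$ would then force $\hat\Omega$ to lie on one side of $P$, hence $\overline\Omega$ on one side of the hyperplane $P\cap \bfR^n$; but the vertical projection $x^\star$ of $p^\star$ lies in the interior of $\Omega$ and on $P\cap \bfR^n$, a contradiction. Thus $\nu^{n+1} > 0$ interior to $\Sigma$, so each vertical line through $\Omega$ meets $\Sigma$ transversally; the asymptotic boundary condition then forces exactly one intersection per line, and $\Sigma = \{(x, u(x)) : x \in \Omega\}$ for some $u \in C^2(\Omega)\cap C^0(\overline\Omega)$ with $u > 0$ in $\Omega$ and $u = 0$ on $\partial\Omega$.

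Once $\Sigma$ is represented as such a graph, formula \eqref{eq1.140} reads
\[ h_{ij} = \frac{1}{u^2 w}\{\delta_{ij} + u_i u_j + u u_{ij}\}, \]
so strict positive definiteness of the hyperbolic second fundamental form is literally \eqref{eq1.160}. Since the Euclidean Hessian of $v(x) := u(x)^2 + |x|^2$ equals $v_{ij} = 2\{\delta_{ij} + u_i u_j + u u_{ij}\}$, condition \eqref{eq1.160} is precisely the strict convexity of $v$, completing the theorem. The main obstacle throughout is the geometric step of excluding interior points with vertical tangent plane; this is where completeness, compact asymptotic boundary, and strict hyperbolic convexity all enter in an essential way, while the rest is a direct reading of \eqref{eq1.140}.
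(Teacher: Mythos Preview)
The paper does not itself prove Theorem~\ref{th1.0}; the result is quoted from \cite{GSS} and introduced only as ``a beautiful consequence of \eqref{eq1.140}.'' There is therefore no argument in this paper to compare your outline against. Your second step---reading \eqref{eq1.160} and the strict convexity of $u^2+|x|^2$ directly from \eqref{eq1.140}---is exactly the observation the authors are highlighting.

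Your first step, however, has a genuine gap. In the tangent-plane argument you assert that the vertical projection $x^\star$ of $p^\star$ lies in the interior of $\Omega$, but at that stage $\Omega$ has only been defined as the ideal boundary of $\hat\Omega$, and no relation between interior points of $\Sigma$ and their vertical projections has been established. In fact the vertical geodesic through $p^\star$ lies entirely in the supporting totally geodesic hyperplane $P$, and by strict convexity it meets $\overline{\hat\Omega}$ only at the single point $p^\star$; its ideal endpoint $x^\star$ therefore lies, if anywhere, in the \emph{complement} of $\Omega$, and the intended contradiction does not close. A secondary caution: the Van Heijenoort--Sacksteder theorem you invoke is a Euclidean statement, and hyperbolic strict convexity does not imply Euclidean convexity (horizontal horospheres are Euclidean hyperplanes in the half-space model); you need the hyperbolic analogue (do~Carmo--Warner, Alexander), which does apply here, but the distinction should be made explicit.
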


According to Theorem~\ref{th1.0}, our assumption that $\Sigma$ is a graph
is completely general and the asymptotic boundary $\Gamma$ must be the boundary
of some bounded domain $\Omega$ in $\bfR^n$. \\

 Problem (\ref{eq1.10})-(\ref{eq1.20}) then reduces to the
Dirichlet problem for a fully nonlinear second order equation which we
shall write in the form
\begin{equation}
\label{eq1.170}
%\begin{cases}
G(D^2u, Du, u) = \sigma,
\;\; u > 0 \;\;\; \text{in $\Omega \subset \bfR^n$}
%\end{cases}
\end{equation}
with the boundary condition
\begin{equation}
\label{eq1.180}
             u = 0 \;\;\;    \text{on $\partial \Omega$}.
\end{equation}

%The exact formula of $G$
%will be given in Section~\ref{gsz-F} (see equation~\eqref{gs5-F60}).

We seek solutions of equation (\ref{eq1.170}) satisfying
(\ref{eq1.160}). Following the literature we call such solutions
{\em admissible}. By \cite{CNS3} condition~(\ref{eq1.50}) implies
that equation (\ref{eq1.170}) is elliptic for admissible solutions.
Our goal is to show that the Dirichlet problem
(\ref{eq1.170})-(\ref{eq1.180}) admits smooth admissible solutions
for all $0 < \sigma < 1$, which is optimal.\\

Our main result of the paper may be stated as follows.

\begin{theorem}
\label{th2.0}
Let $\Gamma = \partial \Omega \times \{0\} \subset \bfR^{n+1}$
where $\Omega$ is a bounded smooth domain in $\bfR^n$.
Suppose $\sigma \in (0, 1)$  and that $f$ satisfies
 conditions (\ref{eq1.50})-(\ref{eq1.100}) in $K_n^+$.
Then there exists a complete locally strictly convex hypersurface
$\Sigma$ in $\bfH^{n+1}$ satisfying (\ref{eq1.10})-(\ref{eq1.20})
with uniformly bounded principal curvatures
\begin{equation}
\label{eq1.190}
 |\kappa[\Sigma]| \leq C \;\; \mbox{on $\Sigma$}.
\end{equation}
Moreover, $\Sigma$ is the graph of an admissible solution $u \in C^\infty
(\Omega) \cap C^1 (\bar{\Omega})$ of the Dirichlet problem
\eqref{eq1.170}-\eqref{eq1.180}.
Furthermore, $u^2 \in C^{\infty} (\Omega)\cap C^{1,1}(\ol{\Omega}) $
and
\begin{equation}
\label{eq1.200}
\begin{aligned}
&\, u|D^2 u| \leq C
\;\;\; \mbox{in $\Omega$}, \\
&\, \sqrt{1 + |Du|^2} = \frac{1}{\sigma}
\;\;\; \mbox{on $\partial \Omega$}
\end{aligned}
\end{equation}
\end{theorem}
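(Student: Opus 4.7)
The plan is to treat \eqref{eq1.170}-\eqref{eq1.180} as the $\epsilon \downarrow 0$ limit of the non-degenerate approximate Dirichlet problems
\[
G(D^2 u, Du, u) = \sigma, \quad u = \epsilon \;\; \text{on } \partial\Omega,
\]
each of which I would solve by the continuity method. A convenient deformation interpolates between an explicit admissible graph at $t=0$ (e.g.\ a hyperbolic equidistant / spherical-cap solution over a large ball containing $\Omega$, with boundary height suitably chosen) and the target equation at $t=1$; openness comes from the implicit function theorem in $C^{2,\alpha}$ using \eqref{eq1.50}-\eqref{eq1.60}, and closedness from a priori estimates that must be uniform both in the deformation parameter and in $\epsilon$.

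The estimates would be set up in the usual order. $C^0$ and boundary gradient bounds come from sandwiching $u$ between totally umbilic constant-$\sigma$ graphs in $\bfH^{n+1}$: inner tangent spherical caps at each $x_0 \in \partial\Omega$ and a large enveloping cap over $\Omega$. The inner caps force $|Du(x)|^2 \to \sigma^{-2}-1$ as $x \to x_0$, which both bounds the boundary gradient uniformly in $\epsilon$ and identifies the limit in \eqref{eq1.200}. Boundary $C^2$ estimates follow from double-normal and mixed tangential-normal barrier arguments of Caffarelli-Nirenberg-Spruck type, adapted via \eqref{eq1.140}-\eqref{eq1.150} to the hyperbolic setting.

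The crux is the global principal curvature bound \eqref{eq1.190}. I would apply the maximum principle on $\Sigma$ to a test function of the form $\log \kappa_{\max} + \varphi(\nu^{n+1}) + A u$, using \eqref{eq1.150} to pass between hyperbolic and Euclidean second fundamental forms. Differentiating \eqref{eq1.10} twice in a principal direction, the concavity \eqref{eq1.60} absorbs the bad third-order terms, while \eqref{eq1.110}-\eqref{eq1.120} provide a uniform lower bound on $\sum f_i$ that activates the good terms arising from the constant negative ambient curvature. The decisive quantitative input, at the point where $\kappa_{\max}$ is attained, is the asymptotic growth condition \eqref{eq1.100}: combined with $\sigma<1$ it rules out $\kappa_{\max}$ being arbitrarily large, which is precisely why the hypothesis $\sigma\in(0,1)$ is sharp.

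Once the full a priori package is in place, Evans-Krylov plus Schauder bootstrap yields $u^\epsilon \in C^\infty(\overline\Omega)$. Extracting a limit as $\epsilon\to 0$, using the $\epsilon$-uniform curvature bound, gives $u \in C^\infty(\Omega) \cap C^{0,1}(\overline\Omega)$ with $u=0$ on $\partial\Omega$; the relations \eqref{eq1.140}-\eqref{eq1.150} then turn the bound on $\kappa_i$ into the estimate $u|D^2u|\le C$ and, together with the boundary gradient value, into $u^2 \in C^{1,1}(\overline\Omega)$ with the identification \eqref{eq1.200}. The main obstacle I anticipate is the global principal curvature estimate and its stability as $\epsilon \to 0$: everything else is a fairly standard translation of elliptic regularity to the half-space model, but this step is where the structural hypotheses \eqref{eq1.60}, \eqref{eq1.90}, \eqref{eq1.100} must be used together and where the sharpness $\sigma<1$ enters.
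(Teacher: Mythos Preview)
Your overall architecture is correct and matches the paper: solve the $\epsilon$-approximate problem \eqref{eq1.170},\eqref{eq1.180'} (this is Theorem~\ref{th3.0}, already done in \cite{GSS}), establish $\epsilon$-independent curvature bounds, and pass to the limit using Evans--Krylov. The boundary work you describe is essentially what \cite{GSS} provides, and your derivation of \eqref{eq1.200} from the curvature bound via \eqref{eq1.140}--\eqref{eq1.150} is right.

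The gap is in the global curvature estimate, and it is exactly the step you flag as the main obstacle. You misidentify the mechanism: condition~\eqref{eq1.100} is \emph{not} what closes the interior maximum-principle argument for $\kappa_{\max}$, and neither is $\sigma<1$. In the paper's proof of Theorem~\ref{th4.10}, neither hypothesis appears; \eqref{eq1.100} is consumed earlier, in the boundary second-derivative estimates of \cite{GSS}, and $\sigma<1$ enters only through the existence of barriers and the asymptotic-angle estimate. So a plan that relies on \eqref{eq1.100} to rule out blow-up at an interior maximum of $\kappa_{\max}$ is aiming at the wrong target.

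What the paper actually does is: (i) first obtain a uniform positive lower bound $\nu^{n+1}\ge 2a>0$ on $\Sigma^\epsilon$ via the asymptotic-angle maximum principle (Theorem~\ref{th3.1}) combined with the convexity argument of Proposition~\ref{prop3.2}; (ii) then apply the maximum principle to $\kappa_{\max}/(\nu^{n+1}-a)$. At the interior maximum, concavity alone in the form ``$-F^{ij,kl}h_{ij1}h_{kl1}\ge 0$'' is not enough; the proof uses the sharper Andrews--Gerhardt inequality
\[
-F^{ij,kl}h_{ij1}h_{kl1}\ \ge\ 2\sum_{i\ge2}\frac{f_i-f_1}{\kappa_1-\kappa_i}\,h_{i11}^2,
\]
together with the critical-point relation $h_{11i}=\dfrac{\kappa_1}{\nu^{n+1}-a}\dfrac{u_i}{u}(\nu^{n+1}-\kappa_i)$, and then a splitting of indices into $J=\{i:\kappa_i<\nu^{n+1},\ f_i<\theta^{-1}f_1\}$ and $L=\{i:\kappa_i<\nu^{n+1},\ f_i\ge\theta^{-1}f_1\}$. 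The $J$-sum is harmless because $\kappa_1 f_1\le\sigma$; the $L$-sum is absorbed by the Andrews--Gerhardt term. Your test function $\log\kappa_{\max}+\varphi(\nu^{n+1})+Au$ could in principle be massaged into this form (take $\varphi=-\log(\nu^{n+1}-a)$ and drop $Au$), but the essential new idea you are missing is the Andrews--Gerhardt refinement and the $J/L$ index decomposition, not \eqref{eq1.100}.
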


For Gauss curvature, $f (\lambda)=(H_n)^{\frac1n}$,  Theorem \ref{th2.0}
was proved by Rosenberg and Spruck \cite{RS94}.

Equation (\ref{eq1.170}) is
singular where $u = 0$. It is therefore natural to approximate the
boundary condition (\ref{eq1.180}) by
\begin{equation}
\label{eq1.180'}
             u = \epsilon > 0 \;\;\;  \text{on $\partial \Omega$}.
\end{equation}
When $\epsilon$ is sufficiently small, we showed in \cite{GSS} that the
Dirichlet problem (\ref{eq1.170}),(\ref{eq1.180'}) is solvable for all
$\sigma \in (0, 1)$.

\begin{theorem}
\label{th3.0}
Let $\Omega$ be a bounded smooth domain in $\bfR^n$
and $\sigma \in (0, 1)$. Suppose $f$ satisfies
(\ref{eq1.50})-(\ref{eq1.100}) in $K_n^+$. Then for any
$\epsilon > 0$ sufficiently small, there exists an admissible
solution $u^{\epsilon} \in C^\infty (\bar{\Omega})$ of the Dirichlet
problem (\ref{eq1.170}), (\ref{eq1.180'}). Moreover, $u^{\epsilon}$
satisfies the {\em a priori} estimates
\begin{equation}
\label{eq1.210}
\sqrt{1 + |D u^{\epsilon}|^2} \leq \frac{1}{\sigma} + C \epsilon, \,
   u^{\epsilon}|D^2 u^{\epsilon}| \leq C
\;\;\; \mbox{on $\partial \Omega$}\, ,
\end{equation}
and
\begin{equation}
\label{eq1.220}
u^{\epsilon}|D^2 u^{\epsilon}| \leq \frac{C}{\epsilon^2}
\;\;\; \mbox{in $\Omega$.}
\end{equation}
where $C$ is independent of $\epsilon$.
\end{theorem}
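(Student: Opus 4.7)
The plan is a continuity method combined with $t$-uniform $C^{2,\alpha}(\bar\Omega)$ estimates. For fixed small $\epsilon>0$, I would deform the target curvature from some $\sigma_0\in(\sigma,1)$, for which an explicit admissible subsolution over $\Omega$ is available (for instance the lower cap of a large hyperbolic ball containing $\Omega$), down to $\sigma$:
\[
G(D^2 u, Du, u)=(1-t)\sigma_0+t\sigma,\qquad u|_{\partial\Omega}=\epsilon,\quad t\in[0,1].
\]
Openness at each $t$ is the implicit function theorem, since admissibility and \eqref{eq1.50}--\eqref{eq1.60} make the linearization uniformly elliptic and concave. Closedness reduces to the a priori bounds \eqref{eq1.210}--\eqref{eq1.220}.

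The $C^0$ bound $\epsilon\le u^\epsilon\le M(\Omega,\sigma)$ is produced from above by the lower cap of a hyperbolic sphere of $f$-curvature slightly exceeding $\sigma$ whose base contains $\Omega$. An interior gradient bound then follows from admissibility \eqref{eq1.160}: strict convexity of $u^2+|x|^2$ controls $|Du|$ in terms of $\sup u/\dist(\cdot,\partial\Omega)$ and the boundary gradient. The sharp boundary estimate $\sqrt{1+|Du^\epsilon|^2}\le\sigma^{-1}+C\epsilon$ is obtained by comparison with explicit model solutions: hyperbolic equidistant hypersurfaces to totally geodesic half-planes tangent to $\Gamma$ have constant $f$-curvature exactly $\sigma$ by \eqref{eq1.80} and satisfy $\sqrt{1+|Du|^2}=\sigma^{-1}$ on their asymptotic boundary, so translating them vertically by $\epsilon$ produces matching upper and lower barriers at every boundary point with the stated $O(\epsilon)$ correction.

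The main effort is the $C^2$ estimate. On $\partial\Omega$, pure tangential second derivatives are bounded from $u^\epsilon\equiv\epsilon$ plus the gradient bound; mixed tangential-normal second derivatives are controlled by a linear barrier of the form $\pm\bigl(\nabla_\tau u^\epsilon-\nabla_\tau u^\epsilon(x_0)\bigr)-A\dist(\cdot,\partial\Omega)-B\dist(\cdot,\partial\Omega)^2$ with the maximum principle applied to the linearization, where \eqref{eq1.100} is invoked to absorb the error terms arising from the weight $u^{-1}$ in \eqref{eq1.140}. The doubly normal piece is then extracted algebraically from the equation via the eigenvalue argument of Caffarelli--Nirenberg--Spruck using \eqref{eq1.50}, \eqref{eq1.70} and \eqref{eq1.100}, yielding $u^\epsilon|D^2 u^\epsilon|\le C$ on $\partial\Omega$. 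For the interior, I would apply the maximum principle to $W=\log\kappa_{\max}+\varphi(|X|^2)+\psi(u^\epsilon)$ with $\varphi,\psi$ tuned through \eqref{eq1.100} and \eqref{eq1.120}; the hyperbolic scale factor $u^{-2}$ visible in \eqref{eq1.130}--\eqref{eq1.140} is responsible for the $\epsilon^{-2}$ appearing in \eqref{eq1.220}.

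The hardest step will be the coupling of the sharp boundary gradient bound with the boundary $C^2$ estimate, because admissibility \eqref{eq1.160} degenerates near $\partial\Omega$ (the $uu_{ij}$ term is small there), so the standard barrier and linearization arguments must be tuned carefully and the asymptotic condition \eqref{eq1.100} is indispensable for keeping the linearization uniformly elliptic through this transitional regime. With full $C^2$ estimates in hand, Evans--Krylov upgrades to $C^{2,\alpha}(\bar\Omega)$, Schauder bootstraps to $C^\infty(\bar\Omega)$, and the continuity method closes.
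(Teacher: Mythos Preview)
The paper does not prove this theorem itself: it is imported from \cite{GSS} (Guan--Spruck--Szapiel), and the present paper only records a correction in the Remark immediately following the statement. Your outline---continuity method, explicit equidistant barriers for $C^0$ and the sharp boundary gradient, the Caffarelli--Nirenberg--Spruck scheme for boundary $C^2$ (tangential, mixed, then double-normal extracted from the equation via \eqref{eq1.100}), and a test-function maximum principle for interior $C^2$---is essentially the approach of \cite{GSS}, so in that sense your plan is correct and matches the intended argument.

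One step deserves care, and it is precisely the one the paper flags as having been wrong in \cite{GSS}: the global gradient estimate. Your sentence ``strict convexity of $u^2+|x|^2$ controls $|Du|$ in terms of $\sup u/\dist(\cdot,\partial\Omega)$ and the boundary gradient'' does not yield a bound that is uniform up to $\partial\Omega$, since the $1/\dist$ term degenerates there; and for the constants in \eqref{eq1.210} to be independent of $\epsilon$ one needs a genuinely global estimate. The present paper's repair combines two ingredients: Proposition~\ref{prop3.2}, which uses local convexity \eqref{eq1.160} to show that $uw=u\sqrt{1+|Du|^2}$ can have an interior maximum only where $Du=0$, and the asymptotic angle maximum principle Theorem~\ref{th3.1}, which gives $(\sigma-\nu^{n+1})/u\le C$ and hence a uniform lower bound on $\nu^{n+1}$ wherever $u$ is small. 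Together these yield Corollary~\ref{cor3.3}. Your convexity idea is in the right direction but needs this second ingredient to close; since this is exactly the point on which the original published proof stumbled, you should not treat it as routine.
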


\begin{remark}
The global gradient estimate, Lemma 3.4 of \cite{GSS} is not correct as stated.
This may be corrected using the convexity argument of \cite{RS94} or using
Corollary~\ref{cor3.3} of section~\ref{sec3} of this paper.
Theorem~\ref{th3.0} above as well as Theorem 1.2 of \cite{GSS} remain valid.
However no apriori uniqueness can be asserted. In Theorem~\ref{th5.0} we
prove a uniqueness result for a special class of curvature functions.
\end{remark}

Our main technical difficulty in proving Theorem \ref{th2.0} is that the
estimate \eqref{eq1.220} does not allow us to pass to the limit.
In \cite{GSS} we were able to obtain a global estimate independent of
$\epsilon$ for the hyperbolic principal curvatures for $\sigma^2>\frac18$.
In this paper we obtain such estimates for all $\sigma \in (0,1)$ by proving
a maximum principle for the largest hyperbolic principal curvature.

\begin{theorem}
\label{th4.0}
Let $\Omega$ be a bounded smooth domain in $\bfR^n$ and $\sigma \in (0, 1)$.
Suppose $f$ satisfies (\ref{eq1.50})-(\ref{eq1.100}) in $K_n^+$.
Then for any admissible solution $u^{\epsilon} \in C^\infty (\bar{\Omega})$
of the Dirichlet problem (\ref{eq1.170}), (\ref{eq1.180'}),
\be
\label{eq1.230}
\max_{\vx \in \Sigma^{\epsilon}} \kappa_{\max} (\vx)
  \leq C(1+\max_{\vx \in \partial  \Sigma^{\epsilon}} \kappa_{\max} (\vx) )
\ee
where $\Sigma^{\epsilon}=\mbox{graph $u^{\epsilon}$}$ and C is independent
of $\epsilon$.
\end{theorem}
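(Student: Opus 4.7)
The plan is to apply the maximum principle on $\Sigma^\epsilon$ to an auxiliary function of the form
\[ W = \log H(h_{ij}) + \psi(\eta), \]
where $H$ is a smooth symmetric approximation of the largest eigenvalue of the Weingarten operator (for instance $H = (\sum \kappa_i^{2p})^{1/(2p)}$ with $p \gg 1$), $\eta$ is a suitable geometric quantity depending on position on $\Sigma^\epsilon$ (the natural candidate being the normal component $\vn^{n+1} = u/w$ or a logarithm thereof), and $\psi$ is a one-variable function to be chosen once the principal inequality is derived. If the maximum of $W$ is attained on $\partial\Sigma^\epsilon$, then the uniform control of $\eta$ on $\Sigma^\epsilon$ (since $0 \leq u \leq \diam(\Omega)$ and $w \geq 1$) reduces the conclusion to the desired estimate (\ref{eq1.230}). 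Otherwise we argue at an interior maximum $\vx_0$.

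At $\vx_0$, choose a $g$-orthonormal frame $\{\tau_i\}$ diagonalizing the second fundamental form with $\kappa_1 \geq \kappa_2 \geq \cdots \geq \kappa_n$, so $H(\vx_0) \to \kappa_1$ as $p \to \infty$. Writing the curvature equation as $F(h)=\sigma$ and setting $F^{ij} := \partial F/\partial h_{ij}$, which is positive definite by (\ref{eq1.50}) and concave by (\ref{eq1.60}), differentiating $F(h) = \sigma$ twice along $\tau_1$ gives
\[ \sum F^{ii} h_{ii;11} \;=\; -\sum F^{ij,kl} h_{ij;1} h_{kl;1} \;\geq\; 0. \]
The Simons--Codazzi commutator in the space form $\bfH^{n+1}$ of sectional curvature $-1$ converts $F^{ij} h_{11;ij}$ into $F^{ij} h_{ij;11}$ plus explicit shape and ambient-curvature corrections. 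Inserting $W_{;i}(\vx_0) = 0$ and $F^{ij} W_{;ij}(\vx_0) \leq 0$, and using the homogeneity identity $\sum F^{ii}\kappa_i = \sigma$ from (\ref{eq1.90}) together with $\sum F^{ii} \geq 1$ from (\ref{eq1.120}), I expect a schematic inequality of the form
\[ \sigma \kappa_1^2 + \kappa_1 \sum F^{ii} \;\leq\; \kappa_1 \sum F^{ii}\kappa_i^2 + \psi'(\eta) \kappa_1 F^{ij} \eta_{;ij} + \psi''(\eta) \kappa_1 F^{ij} \eta_{;i} \eta_{;j}. \]

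The function $\psi$ is then selected so that its contribution dominates the bad cubic term $\kappa_1 F^{11}\kappa_1^2$ on the right. A direct calculation in the half-space model indicates that $\eta_{;ij}$ contains negative multiples of $h_{ij}$, so $F^{ij} \eta_{;ij}$ picks up a term of order $-\sigma\eta$ by homogeneity; choosing $\psi$ concave with $-\psi'$ sufficiently large produces a negative contribution scaling as $\kappa_1^2 \sum F^{ii}$, which absorbs the cubic term. The decisive input is assumption (\ref{eq1.100}): after rescaling $\lambda = \kappa/\kappa_1$, it rules out the extreme configuration $\kappa_1 \to \infty$ with $\kappa_2, \ldots, \kappa_n$ bounded and $f(\kappa) = \sigma < 1$, by producing in the relevant limit a lower bound on $\sum F^{ii}$ that contradicts the absorbed inequality. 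Translating back to $\kappa_1$, this closes the argument and yields $\kappa_1(\vx_0) \leq C\bigl(1 + \max_{\partial\Sigma^\epsilon} \kappa_{\max}\bigr)$.

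The main obstacle is the careful sign-tracking in the Simons--Codazzi computation in hyperbolic space and, most importantly, the selection of the auxiliary pair $(\eta, \psi)$: one must arrange that the negative contributions from $\psi$ quantitatively match the $\kappa_1 F^{11}\kappa_1^2$ term, and that (\ref{eq1.100}) can be invoked with constants independent of $\epsilon$. A secondary technical issue is the non-smoothness of $\kappa_{\max}$ at points of higher multiplicity, which is handled by the smooth proxy $H = (\sum \kappa_i^{2p})^{1/(2p)}$ and a passage to the limit $p \to \infty$, as is standard in maximum-principle arguments for the top eigenvalue of a symmetric tensor.
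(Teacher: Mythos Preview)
Your outline has the right skeleton---the paper also applies the maximum principle to (essentially) $\log h_{11}-\log(\nu^{n+1}-a)$, which is your scheme with $\eta=\nu^{n+1}$ and $\psi(t)=-\log(t-a)$---but two of the load-bearing steps you propose do not hold up.

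First, you discard the third-order terms via concavity, writing $-F^{ij,kl}h_{ij;1}h_{kl;1}\ge 0$ and moving on. The paper cannot afford to do this. Once you choose $\eta=\nu^{n+1}$, the term $\psi'\,F^{ij}\nabla_{ij}\nu^{n+1}$ contains a first-order piece $\tfrac{2}{u}F^{ij}u_i(\nu^{n+1})_j=\tfrac{2}{u^2}\sum_i f_i u_i^2(\nu^{n+1}-\kappa_i)$ (see Lemma~\ref{lem4.30}), which is negative whenever $\kappa_i<\nu^{n+1}$ and $f_i$ is large. Nothing in your inequality absorbs this: the good zero-order terms you cite scale like $a\kappa_1\sum f_i$, while the bad gradient terms scale like $\kappa_1\sum f_i$ with no small factor in front. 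The paper closes this gap by keeping the third-order terms through the Andrews--Gerhardt inequality $-F^{ij,kl}h_{ij1}h_{kl1}\ge 2\sum_{i\ge 2}\tfrac{f_i-f_1}{\kappa_1-\kappa_i}h_{i11}^2$, inserting the critical-point relation $h_{11i}=\tfrac{\kappa_1}{\nu^{n+1}-a}\tfrac{u_i}{u}(\nu^{n+1}-\kappa_i)$, and then splitting indices according to whether $f_i<\theta^{-1}f_1$ or not. The resulting positive terms are exactly what cancels the bad gradient contributions (see \eqref{eq4.100}, \eqref{eq4.150}).

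Second, your identification of \eqref{eq1.100} as the ``decisive input'' is off. The interior maximum-principle argument in Section~\ref{sec4} does not invoke \eqref{eq1.100} at all; the only structural facts used are homogeneity (giving $\sum f_i\kappa_i=\sigma$ and hence $f_i\kappa_i\le\sigma$ for each $i$), $\sum f_i\ge 1$, and the Andrews--Gerhardt refinement of concavity. Condition \eqref{eq1.100} enters earlier, in the existence and boundary estimates of Theorem~\ref{th3.0}. So the rescaling/limiting argument you sketch is not the mechanism that closes the estimate.
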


By Theorem \ref{th4.0}, the hyperbolic  principal curvatures of the
admissible solution $u^{\epsilon}$ given in Theorem \ref{th3.0} are
uniformly bounded above independent of $\epsilon$.
Since $f(\kappa[u^{\epsilon}])=\sigma$ and $f=0$ on $\partial K_n^+$,
the hyperbolic principal curvatures admit a uniform positive lower bound
independent of $\epsilon$ and therefore (\ref{eq1.170}) is uniformly
elliptic on compact subsets of $\Omega$ for the solution $u^{\epsilon}$.
By the interior estimates of Evans and Krylov, we obtain uniform
$C^{2,\alpha}$ estimates for any compact subdomain of $\Omega$.
The proof of Theorem \ref{th2.0} is now routine. \\

Finally we prove a uniqueness result and as an application prove a result
about foliations. This latter result is relevant to the study of foliations
of the complement of the convex core of quasi-fuchsian manifolds
(see \cite{Labourie}, \cite{RS94}, \cite{Smith}).

\begin{theorem}
\label{th5.0}
Suppose $f$ satisfies
(\ref{eq1.50})-(\ref{eq1.100}) in $K_n^+$ and in addition,
\be \label{eq1.240}
\sum f_i > \sum \lambda_i^2 f_i \,\,\mbox {in $K_n^+ \cap \{ 0<f<1\}$}.
\ee
Then the solutions given in Theorem \ref{th2.0} and Theorem \ref{th3.0} are
unique. In particular uniqueness holds for
$f=(\frac{H_n}{H_l})^{\frac1{n-l}}$ with $l-1$ or $l-2$.
\end{theorem}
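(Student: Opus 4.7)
The plan is to prove uniqueness via a maximum principle argument on the difference $v=u_1-u_2$ of two admissible solutions sharing the same Dirichlet data. If $v\not\equiv 0$, then WLOG $v$ attains a positive interior maximum at some $x_0\in\Omega$, where $Du_1=Du_2$, $D^2u_1\le D^2u_2$ as symmetric matrices, and $u_1>u_2>0$.

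I first combine the concavity of $f$ with the decomposition $\kappa_i=u\kappa_i^e+\nu^{n+1}$ from (\ref{eq1.150}). In a frame diagonalizing the hyperbolic shape operator $M_2=u_2 h^e(u_2)+\nu^{n+1}I$ of $\Sigma_2$ at $x_0$, the subgradient inequality for the matrix-concave function $F(M):=f(\lambda(M))$ yields $\sum_i f_i(\kappa(u_2))[M_1-M_2]_{ii}\ge 0$. Weyl's inequality applied to $h^e(u_1)\le h^e(u_2)$ (which follows from $D^2u_1\le D^2u_2$ and the shared $Du$) gives $[M_1-M_2]_{ii}\le (u_1-u_2)\kappa_i^e(u_2)$. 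Since $u_1>u_2$, combining these with Euler's identity $\sum f_i\kappa_i=f=\sigma$ reduces to $\nu^{n+1}\sum f_i(\kappa(u_2))\le \sigma$ at $x_0$.

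To derive a contradiction, I need the reverse inequality $\nu^{n+1}\sum f_i\ge\sigma$ at any admissible $\kappa$ with $f(\kappa)=\sigma\in(0,1)$. This is where (\ref{eq1.240}) enters: together with a maximum principle argument on the slope function $w=\sqrt{1+|Du|^2}=1/\nu^{n+1}$ (whose linearized evolution has the correct sign of the zeroth-order term precisely under hypothesis (\ref{eq1.240})), one obtains the global gradient estimate $w\le 1/\sigma$, i.e., $\nu^{n+1}\ge\sigma$, matching the boundary value in (\ref{eq1.200}). Combined with $\sum f_i\ge 1$ from (\ref{eq1.120}), this forces $\nu^{n+1}\sum f_i\ge\sigma$; equality at $x_0$ would require both $\nu^{n+1}=\sigma$ and $\sum f_i=1$, and by the strict concavity version of (\ref{eq1.120}) sharpened by (\ref{eq1.240}), $\sum f_i=1$ forces $\kappa(u_2)=\mathbf{1}$, giving $f(\kappa(u_2))=1\ne\sigma$, a contradiction. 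For the explicit family $f=(H_n/H_l)^{1/(n-l)}$ with $l=n-1$ or $n-2$, (\ref{eq1.240}) is verified by direct computation: when $l=n-1$, $f=n/\sum\kappa_i^{-1}$ is the harmonic mean with $f_i=f^2/(n\kappa_i^2)$, so (\ref{eq1.240}) reduces to $\sum\kappa_i^{-2}>n$, which follows from $\sum\kappa_i^{-1}>n$ (equivalent to $f<1$) by Cauchy--Schwarz; the case $l=n-2$ is analogous.

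The main obstacle is establishing the interior gradient bound $\nu^{n+1}\ge\sigma$ under hypothesis (\ref{eq1.240}). The pointwise algebraic condition (\ref{eq1.240}) is tailored so that the zeroth-order coefficient in the linearized equation for $w$ has the right sign, allowing the maximum principle to propagate the boundary slope (\ref{eq1.200}) into $\Omega$. This step bridges the algebraic character of (\ref{eq1.240}) with the analytic uniqueness statement, and is the real content of the proof.
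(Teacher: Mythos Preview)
Your approach is genuinely different from the paper's, and the first half of your argument is sound: at an interior maximum of $v=u_1-u_2$, the concavity of $F$ combined with $D^2u_1\le D^2u_2$ and the decomposition $\kappa_i=u\kappa_i^e+\nu^{n+1}$ does yield
\[
0\le \sum_i f_i(\kappa(u_2))\,[M_1-M_2]_{ii}\le \frac{u_1-u_2}{u_2}\bigl(\sigma-\nu^{n+1}\textstyle\sum f_i\bigr),
\]
hence $\nu^{n+1}\sum f_i(\kappa(u_2))\le\sigma$ at $x_0$. The problem is the second half: you assert the reverse inequality by claiming a global bound $\nu^{n+1}\ge\sigma$, obtained from a maximum principle for $w$ whose ``zeroth-order term has the correct sign under \eqref{eq1.240}''. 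This step is not established and does not appear to follow. From \eqref{eq4.60}, at an interior minimum of $\nu^{n+1}$ one gets
\[
\sigma\bigl(1+(\nu^{n+1})^2\bigr)\ge \nu^{n+1}\Bigl(\sum f_i+\sum f_i\kappa_i^2\Bigr),
\]
and using $\sum f_i\ge 1$ and $\sum f_i\kappa_i^2\ge \sigma^2/\sum f_i$ this factors as $(\nu^{n+1}-\sigma)(1-\sigma\nu^{n+1})\le 0$, i.e.\ $\nu^{n+1}\le\sigma$ at the minimum---the wrong direction. Condition \eqref{eq1.240} gives an \emph{upper} bound $\sum f_i+\sum f_i\kappa_i^2<2\sum f_i$, which does not reverse this. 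The paper's own a priori estimate (Theorem~\ref{th3.1}) only yields $\nu^{n+1}\ge\sigma-Cu$, not $\nu^{n+1}\ge\sigma$. Your equality analysis is also incomplete: $\sum f_i=1$ forces $\kappa=\mathbf{1}$ only under strict concavity along the relevant segment, which \eqref{eq1.240} does not provide.

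The paper proceeds by an entirely different, geometric argument (after Schlenker). If $u_2\not\le u_1$, take $P\in\Sigma_2$ at maximal hyperbolic distance $t^*$ from $\Sigma_1$ and flow $\Sigma_2$ toward $\Sigma_1$ by parallel hypersurfaces $\Sigma_2^t$. Along a normal geodesic the principal curvatures satisfy $\kappa_i'(t)=1-\kappa_i^2$, so
\[
\frac{d}{dt}f(\kappa)=\sum f_i-\sum f_i\kappa_i^2>0
\]
\emph{precisely} by hypothesis \eqref{eq1.240}. Thus $\Sigma_2^{t^*}$ is a strict supersolution touching $\Sigma_1$ from below at a smooth interior point, contradicting the comparison principle. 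This is where \eqref{eq1.240} is actually used: it is exactly the monotonicity of $f$ under the equidistant flow, not a sign condition in a linearized equation for $w$. Your verification of \eqref{eq1.240} for $l=n-1$ via Cauchy--Schwarz is correct and matches the paper's Example~\ref{ex5.30}.
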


\begin{theorem}
\label{th6.0}
a. Let f satisfy the conditions of Theorem \ref{th5.0} and assume that
$\Gamma$ is smooth. Then for each $\sigma \in (0,1)$ there are exactly
two embedded strictly locally convex hypersurfaces satisfying
\eqref{eq1.10}, \eqref{eq1.20}.
Each surface is a graph of
$u^{\sigma} \in C^{\infty}(\Omega^{\pm}) \cap C^{1}(\ol{\Omega^{\pm}})$
where $\Omega^{\pm}$ are the components of the complement of $\Gamma$.
Moreover the solution hypersurfaces
$\Sigma^{\sigma}= \mbox{graph\,\,$u^{\sigma}$}$ have uniformly bounded
principal curvatures and foliate each component of
$\bfH^{n+1} \setminus \mathcal{CH}(\Gamma)$, the complement of the hyperbolic
convex hull of $\Gamma$.

b. Let $f=\frac{H_n}{H_{n-1}}$ and $\Gamma=\partial \Omega$ where
$\Omega$ is a simply connected Jordan domain.
If $n>2$, assume  in addition that $\Gamma$ is regular for Laplace's equation.
Then the conclusions of part a. hold with
$u^{\sigma}(x) \in C^{\infty}(\Omega^{\pm}) \cap C^{0}(\ol{\Omega^{\pm}})$
and the principal curvatures are uniformly bounded on compact subsets.
\end{theorem}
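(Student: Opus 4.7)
My plan is to deduce part~(a) from the existence and uniqueness results already in the paper, with a monotonicity/comparison argument for the foliation, and to obtain part~(b) by exhausting $\Omega^\pm$ by smooth subdomains. The two claimed solutions in part~(a) come directly from Theorems~\ref{th2.0} and~\ref{th5.0}: over the bounded interior $\Omega^+=\Omega$ these give the unique strictly locally convex graph $\Sigma^\sigma_+$, and for the unbounded exterior $\Omega^-$ I would first apply the hyperbolic isometry obtained by inversion in a unit sphere centered at a chosen $p_0\in\Omega^+$ (which swaps $\infty$ with $p_0$ and sends $\Omega^-$ to a bounded smooth domain $\Omega'$), apply Theorems~\ref{th2.0} and~\ref{th5.0} over $\Omega'$, and pull back to obtain the unique $\Sigma^\sigma_-$. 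By Theorem~\ref{th1.0} every complete locally strictly convex hypersurface with compact asymptotic boundary $\Gamma$ is a vertical graph over a bounded domain whose boundary is $\Gamma$; the only such domains are $\Omega^+$ and $\Omega^-$, so these two graphs account for all solutions, and the uniform curvature bound is~\eqref{eq1.190}.

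For the foliation claim of part~(a), let $U^+$ denote the component of $\bfH^{n+1}\setminus\mathcal{CH}(\Gamma)$ whose asymptotic boundary lies in $\bar\Omega^+$. I would verify three properties of $\{\Sigma^\sigma_+\}_{\sigma\in(0,1)}$. First, \emph{monotonicity}: for $0<\sigma_1<\sigma_2<1$ the two graphs agree on $\partial\Omega$, and the comparison principle for admissible solutions of~\eqref{eq1.170} (the same ingredient used in Theorem~\ref{th5.0}) forces $u^{\sigma_1}_+>u^{\sigma_2}_+$ in $\Omega$, so the graphs are pairwise disjoint and ordered by $\sigma$. Second, as $\sigma\uparrow 1$ the boundary identity $\sqrt{1+|Du^\sigma_+|^2}=1/\sigma\to 1$ gives $|Du^\sigma_+|\to 0$ on $\partial\Omega$; combined with $u^\sigma_+=0$ on $\partial\Omega$ and the uniform $C^{1,1}$ bound of Theorem~\ref{th2.0}, Arzel\`a--Ascoli yields $u^\sigma_+\to 0$ uniformly on $\bar\Omega$, so the graphs escape every compact subset of $U^+$. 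Third, as $\sigma\downarrow 0$ the monotone limit $u^0_+:=\lim_{\sigma\downarrow 0}u^\sigma_+$ defines a locally convex generalized solution of $f(\kappa)=0$; by~\eqref{eq1.70} its principal curvatures vanish, so the limit graph is a union of totally geodesic pieces asymptoting to $\Gamma$ from within $\bar U^+$, which must coincide with the $U^+$ face of $\partial\mathcal{CH}(\Gamma)$. Together with continuity of $\sigma\mapsto u^\sigma_+$, these three facts imply that $\{\Sigma^\sigma_+\}$ foliates $U^+$; the same argument after the inversion above foliates the other component.

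For part~(b), with $f=H_n/H_{n-1}$ and a (possibly non-smooth) Jordan boundary $\Gamma$, I would approximate $\Omega^\pm$ from inside by smooth subdomains $\Omega^\pm_k\nearrow\Omega^\pm$, apply part~(a) to obtain $\Sigma^\sigma_{\pm,k}$, and use Theorem~\ref{th4.0} together with the Evans--Krylov interior estimates to extract a smooth graph limit $u^\sigma_\pm\in C^\infty(\Omega^\pm)$ with curvature bounds uniform on compact subsets. Continuity up to $\Gamma$ is enforced by constructing upper and lower barriers at each boundary point: every boundary point of a Jordan domain is regular when $n=2$, and for $n>2$ the Wiener regularity hypothesis supplies harmonic barriers above and below that can be adapted to the quasilinear operator $H_n/H_{n-1}$. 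The main obstacle I foresee is step three of the foliation argument: the curvature bound of Theorem~\ref{th4.0} deteriorates as $\sigma\downarrow 0$ and~\eqref{eq1.170} degenerates, so identifying $u^0_+$ precisely with $\partial\mathcal{CH}(\Gamma)\cap\bar U^+$---rather than merely with some totally geodesic locus---requires a separate supporting-hyperplane argument exploiting the local strict convexity of each $\Sigma^\sigma_+$.
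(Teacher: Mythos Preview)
Your overall strategy for part~(a)---existence via Theorem~\ref{th2.0}, uniqueness via Theorem~\ref{th5.0}, inversion to handle $\Omega^-$, and monotonicity in $\sigma$---matches the paper. But there are real gaps in the limiting arguments and in part~(b).

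For $\sigma\to 0$ you write that the monotone limit satisfies $f(\kappa)=0$ and hence, by~\eqref{eq1.70}, ``its principal curvatures vanish.'' This is incorrect: condition~\eqref{eq1.70} says $f=0$ on $\partial K_n^+$, so $f(\kappa)=0$ only forces \emph{some} $\kappa_i=0$, not all of them; the limit need not be totally geodesic. The paper instead squeezes using the comparison Theorem~\ref{th5.10}: each $\Sigma^\sigma$ lies below the convex-hull face $S$, yet for any fixed smooth strictly locally convex hypersurface $S'$ with asymptotic boundary $\Gamma$ one has $\Sigma^\sigma$ above $S'$ once $\sigma$ is small enough, so $\Sigma^\sigma\to S$.

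For $\sigma\to 1$ your Arzel\`a--Ascoli argument is incomplete: knowing $u^\sigma=0$ and $|Du^\sigma|\to 0$ on $\partial\Omega$ together with a $C^{1,1}$ bound does not by itself force subsequential limits to vanish in the interior, and the constant in Theorem~\ref{th2.0} is not asserted to be uniform as $\sigma\to 1$. The paper bypasses this by comparing $u^\sigma$ with the explicit equidistant-sphere supersolution $v^\sigma$ over a large ball containing $\Omega$, which visibly tends to $0$ as $\sigma\to 1$.

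For part~(b), invoking Theorem~\ref{th4.0} does not give what you need. That estimate bounds $\kappa_{\max}$ in terms of its boundary values, which blow up as the approximating smooth boundaries $\partial\Omega_k$ degenerate. The approximation argument requires an \emph{interior} curvature bound depending only on $\dist(\cdot,\partial\Omega)$ and $\sigma$. The paper proves exactly this as a separate Lemma~\ref{lem5.30}, applying the maximum principle to the test function $\phi\,\kappa_{\max}$ with $\phi=(u-\theta)_+$ and exploiting the special identities $\sum f_i\le n$ and $\sum\kappa_i^2 f_i=\sigma^2$ that hold for $f=H_n/H_{n-1}$ (cf.\ Example~\ref{ex5.30}). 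Without this ingredient your compactness step in part~(b) does not go through.
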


\begin{remark}
i. Graham Smith pointed out to us that in the special case
$n=3$, $f=(\frac{K}H)^{\frac12}$ is his special Lagrangian curvature
with angle $\theta=\pi$ and interior curvature bounds follow from the
geometric ideas of his paper \cite{Smith2}.
Moreover in Lemma 7.4 of \cite{Smith} he showed that special Lagrangian
curvature with angle $\theta \geq (n-1)\frac{\pi}2$ satisfies our uniqueness
condition \eqref{eq1.240}. Thus by Theorems \ref{th5.0} and \ref{th6.0},
the existence of foliations of constant special Lagrangian curvature can be
proven for $\theta \geq (n-1)\frac{\pi}2$ for all $n$.
This includes the special
case $f=K^{\frac12}$ when $n=2$ and $ f=(\frac{K}H)^{\frac13}$ for $n=3$
mentioned above. See \cite{Smith3} for Graham Smith's most recent work which has
some overlap with ours.

ii. Rosenberg and Spruck~\cite{RS94} proved part b of Theorem~\ref{th6.0}
for $f=K^{\frac12}$ in case $n=2$. Here and also in \cite{RS94}, no claim
is made about the higher regularity of the $\mathcal{CH}(\Gamma)$.
In other words, the curvature estimates obtained in the proof of
Theorem~\ref{th6.0} (global for $\Gamma$ smooth and interior for $\Gamma$
Jordan) blow up as $\sigma \goto 0$. We have not yet derived interior
curvature estimates for the case $f=\frac{H_n}{H_{n-2}}$ in the general case.
\end{remark}

The organization of the paper is as follows.
In section \ref{sec2} we establish some basic identities
on a hypersurface $\Sigma$ satisfying \eqref{eq1.10} that will form the basis
of the global gradient estimates derived in section \ref{sec3} and the maximum
principle for $\kappa_{\max}$, the largest principal curvature of $\Sigma$,
which is carried out in section \ref{sec4}.
Finally in section \ref{sec5} we prove the uniqueness Theorem~\ref{th5.0} and
the foliation Theorem~\ref{th6.0}.

\section{Formulas on hypersurfaces}
\label{sec2}
\setcounter{equation}{0}

In this section we will derive some basic identities on a hypersurface
by comparing the induced hyperbolic and Euclidean metrics.\\

Let $\Sigma$ be a hypersurface in $\bfH^{n+1}$. We shall use $g$ and $\nabla$
to denote the induced hyperbolic metric and Levi-Civita connections
on $\Sigma$, respectively. As $\Sigma$ is also a submanifold of $\bfR^{n+1}$,
we shall usually distinguish a geometric quantity with respect to the
Euclidean metric by adding a `tilde' over the corresponding hyperbolic
quantity.
For instance, $\tg$ denotes the induced  metric on $\Sigma$
from $\bfR^{n+1}$, and $\tnabla$ is its Levi-Civita connection.

Let $\vx$ be the position vector of $\Sigma$ in $\bfR^{n+1}$ and set
\[ u = \vx \cdot \bf{e} \]
where $\bf{e}$ is the unit vector in the positive
$x_{n+1}$ direction in $\bfR^{n+1}$, and `$\cdot$' denotes the Euclidean
inner product in $\bfR^{n+1}$.
We refer $u$ as the {\em height function} of $\Sigma$.

Throughout the paper we assume $\Sigma$ is orientable and let ${\bf n}$ be
a (global)
unit normal vector field to $\Sigma$ with respect to the hyperbolic metric.
This also determines a unit normal $\nu$ to $\Sigma$ with respect to the
Euclidean metric by the relation
\[ \nu = \frac{{\bf n}}{u}. \]
We denote $ \nu^{n+1} = \bf{e} \cdot \nu$.

Let $(z_1, \ldots, z_n)$ be local coordinates and
\[ \tau_i = \frac{\partial}{\partial z_i}, \;\; i = 1, \ldots, n. \]
%$\tau_i, \cdots, \tau_n$ be %an orthonormal
%a local frame of vector fields on $\Sigma$.
The hyperbolic and Euclidean metrics of $\Sigma$ are given by
\[ g_{ij} = \langle \tau_i, \tau_j \rangle, \;\;
   \tg_{ij} = \tau_i \cdot \tau_j = u^2 g_{ij}, \]
while the second fundamental forms are
\begin{equation}
\label{eq2.10}
 \begin{aligned}
   h_{ij} \,& = \langle D_{\tau_i} \tau_j, {\bf n} \rangle
              = - \langle D_{\tau_i} {\bf n}, \tau_j \rangle, \\
\thh_{ij} \,& = \nu \cdot \tD_{\tau_i} \tau_j
              = - \tau_j \cdot \tD_{\tau_i} \nu,
    \end{aligned}
\end{equation}
where $D$ and $\tD$ denote the Levi-Civita connection of $\bfH^{n+1}$
and $\bfR^{n+1}$, respectively.
 The following relations are well known (see \eqref{eq1.140}, \eqref{eq1.150}):
\begin{equation}
\label{eq2.20}
 h_{ij} = \frac{1}{u} \thh_{ij} + \frac{\nu^{n+1}}{u^2} \tg_{ij}.
 \end{equation}
and
 $\tilde{\kappa}_1, \cdots, \tilde{\kappa}_n$ by the formula
\begin{equation}
\label{eq2.30}
\kappa_i = u \tilde{\kappa}_i + \nu^{n+1}, \;\;\; i = 1, \cdots, n.
\end{equation}

The Christoffel symbols are related by the formula
\begin{equation}
\label{eq2.40}
\Gamma_{ij}^k = \tilde{\Gamma}_{ij}^k - \frac{1}{u}
   (u_i \delta_{kj} + u_j \delta_{ik} - \tg^{kl} u_l \tg_{ij}).
\end{equation}
It follows that for $v \in C^2 (\Sigma)$
\begin{equation}
\label{eq2.50}
\nabla_{ij} v = v_{ij} - \Gamma_{ij}^k v_k
  = \tilde{\nabla}_{ij} v + \frac{1}{u}
    (u_i v_j + u_j v_i - \tg^{kl} u_k v_l \tg_{ij})
\end{equation}
where (and in sequel)
\[ v_i = \frac{\partial v}{\partial x_i}, \;
   v_{ij} = \frac{\partial^2 v}{\partial x_i x_j}, \; \mbox{etc.} \]
In particular,
\begin{equation}
\label{eq2.60}
\begin{aligned}
\nabla_{ij} u
 \,& = \tilde{\nabla}_{ij} u + \frac{2 u_i u_j}{u}
       - \frac{1}{u} \tg^{kl} u_k u_l \tg_{ij}
  \end{aligned}
\end{equation}
and
\begin{equation}
\label{eq2.70}
\nabla_{ij} \frac{1}{u}
  = - \frac{1}{u^2} \tilde{\nabla}_{ij} u
  + \frac{1}{u^3} \tg^{kl} u_k u_l \tg_{ij}.
\end{equation}
Moreover,
\begin{equation}
\label{eq2.80}
\begin{aligned}
\nabla_{ij} \frac{v}{u}
 \,&  = v \nabla_{ij} \frac{1}{u}
        + \frac{1}{u} \tilde{\nabla}_{ij} v
        - \frac{1}{u^2} \tg^{kl} u_k v_l \tg_{ij}.
 \end{aligned}
\end{equation}

In $\bfR^{n+1}$,
\begin{equation}
\label{eq2.90}
\begin{aligned}
             \tg^{kl} u_k u_l
      \,& = |\tilde{\nabla} u|^2 = 1 - (\nu^{n+1})^2 \\
            \tilde{\nabla}_{ij} u
      \,& = \thh_{ij} \nu^{n+1}.
 \end{aligned}
\end{equation}
Therefore, by \eqref{eq2.30} and \eqref{eq2.70},
\begin{equation}
\label{eq2.100}
\begin{aligned}
\nabla_{ij} \frac{1}{u}
 \,&  = - \frac{\nu^{n+1}}{u^2} \thh_{ij}
  + \frac{1}{u^3} (1 - (\nu^{n+1})^2) \tg_{ij} \\
 \,&  = \frac{1}{u} (g_{ij} - \nu^{n+1} h_{ij}).
 \end{aligned}
\end{equation}
We note that \eqref{eq2.80} and \eqref{eq2.100} still hold for
general local frames $\tau_1, \ldots, \tau_n$.
In particular, if $\tau_1, \ldots, \tau_n$ are orthonormal in the hyperbolic
metric, then
$g_{ij} = \delta_{ij}$ and $\tg_{ij} = u^2 \delta_{ij}$.

We now consider equation~\eqref{eq1.10} on $\Sigma$.
%As in \cite{GS6}, \cite{GS7}, For $K$ as in Section~\ref{sec1},
Let $\mathcal{A}$ be the vector space of $n \times n$ matrices and
\[ \mathcal{A}^+= \{A = \{a_{ij}\} \in \mathcal{A}: \lambda (A) \in K_n^+\}, \]
where $\lambda (A) = (\lambda_1, \dots, \lambda_n)$ denotes the eigenvalues of $A$.
Let $F$ be the function defined by
\begin{equation}
\label{eq2.110}
F (A) = f (\lambda (A)), \;\; A \in \mathcal{A}^+
\end{equation}
and denote
\begin{equation}
\label{eq2.120}
F^{ij} (A) = \frac{\partial F}{\partial a_{ij}} (A), \;\;
  F^{ij, kl} (A) = \frac{\partial^2 F}{\partial a_{ij} \partial a_{kl}} (A).
\end{equation}
Since $F (A)$ depends only on the eigenvalues of $A$, if $A$ is symmetric
then so is the matrix $\{F^{ij} (A)\}$. Moreover,
\[ F^{ij} (A) = f_i \delta_{ij} \]
when $A$ is diagonal, and
\begin{equation}
\label{eq2.130}
 F^{ij} (A) a_{ij} = \sum f_i (\lambda (A)) \lambda_i = F (A),
\end{equation}
\begin{equation}
\label{eq2.140}
F^{ij} (A) a_{ik} a_{jk} = \sum f_i (\lambda (A)) \lambda_i^2.
\end{equation}

Equation~\eqref{eq1.10} can therefore be rewritten in a  local
frame $\tau_1, \ldots, \tau_n$ in the form
\begin{equation}
\label{eq2.150}
F (A[\Sigma]) = \sigma
\end{equation}
where $A[\Sigma] = \{g^{ik} h_{kj}\}$.
%In the following sections let
Let $F^{ij} = F^{ij} (A[\Sigma])$, $F^{ij, kl} = F^{ij, kl} (A[\Sigma])$.

\begin{lemma}
\label{lem2.10}
Let $\Sigma$ be a smooth hypersurface in $\bfH^{n+1}$ satisfying
equation~\eqref{eq1.10}. Then in a local orthonormal frame,
\begin{equation}
\label{eq2.160}
  F^{ij} \nabla_{ij} \frac{1}{u}
    = - \frac{\sigma \nu^{n+1}}{u} + \frac{1}{u} \sum f_i.
\end{equation}
and
\begin{equation}
\label{eq2.170}
  F^{ij} \nabla_{ij} \frac{\nu^{n+1}}{u} =
\frac{\sigma}{u} - \frac{\nu^{n+1}}{u} \sum f_i \kappa_i^2.
\end{equation}
\end{lemma}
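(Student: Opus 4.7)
The plan is to prove the two identities in turn, with the first being almost immediate and the second requiring a covariant differentiation coupled with the Codazzi identity.

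For identity \eqref{eq2.160}, I would simply contract formula \eqref{eq2.100} with $F^{ij}$. In an orthonormal frame diagonalizing $h_{ij}$, we have $F^{ij} = f_i\delta_{ij}$, so $F^{ij}g_{ij} = \sum f_i$, and by \eqref{eq2.130} we have $F^{ij}h_{ij} = F(A[\Sigma]) = \sigma$. This yields \eqref{eq2.160} in one line.

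For \eqref{eq2.170}, I would first derive a compact first-derivative formula for $\nu^{n+1}/u$. Since first derivatives of a scalar are partial derivatives, I would use Euclidean Weingarten in $\bfR^{n+1}$, which gives $\partial_j \nu^{n+1} = -\thh_j^{\,k} u_k$. Combining with \eqref{eq2.20} yields $\thh_j^{\,k} = (h_j^{\,k} - \nu^{n+1}\delta_j^k)/u$, and a brief computation with the product rule gives the clean identity
\[
\nabla_j \frac{\nu^{n+1}}{u} \,=\, -\frac{h_j^{\,k} u_k}{u^2}.
\]

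Next I would differentiate this covariantly and contract with $F^{ij}$. This produces three kinds of terms: a term involving $F^{ij}\nabla_i h_j^{\,k}$, a term $F^{ij} h_j^{\,k} \nabla_{ik} u$, and a quadratic term in $u_k$. The first vanishes because the Codazzi identity $\nabla_i h_{jk} = \nabla_k h_{ij}$ combined with $\nabla_k F(A[\Sigma]) = 0$ gives $F^{ij} \nabla_i h_{lj} = 0$. For the second, I would use \eqref{eq2.60} together with \eqref{eq2.90} and \eqref{eq2.20} to rewrite $\nabla_{ik} u$ purely in hyperbolic quantities as $\nabla_{ij} u = u\nu^{n+1} h_{ij} + 2 u_i u_j/u - u g_{ij}$. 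Contracting with $F^{ij} h_j^{\,k}$ in a frame diagonalizing $h$, the identities $\sum f_i \kappa_i = \sigma$ and $F^{ij} h_{ik} h_{j}^{\,k} = \sum f_i \kappa_i^2$ produce the right-hand side of \eqref{eq2.170} together with a spurious quadratic-in-$u_k$ term that precisely cancels the third term above.

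The main obstacle will be the bookkeeping in the second step—juggling the hyperbolic and Euclidean second fundamental forms, and verifying that the $u_i u_k$-type terms arising from the derivative of $1/u^2$ and from the $2 u_i u_j/u$ piece of \eqref{eq2.60} cancel cleanly against each other. The Codazzi vanishing for the $\nabla h$ term is the crucial structural input but is standard; the challenge is purely the careful tracking of lower-order curvature-gradient interactions.
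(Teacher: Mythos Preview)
Your proof of \eqref{eq2.160} is identical to the paper's: contract \eqref{eq2.100} with $F^{ij}$ and use \eqref{eq2.130}.

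For \eqref{eq2.170} your argument is correct but genuinely different from the paper's. The paper works through the Euclidean side: it applies the product formula \eqref{eq2.80} with $v=\nu^{n+1}$, computes $F^{ij}\tilde h_{ik}\tilde h_{kj}$ via \eqref{eq2.190}, and handles the third-order term $F^{ij}\tilde g^{kl}u_l\tilde\nabla_k\tilde h_{ij}$ by differentiating the equation in its Euclidean form \eqref{eq2.200}, obtaining \eqref{eq2.220}. Everything is then assembled in \eqref{eq2.230}. You instead stay on the hyperbolic side: you first derive the clean first-order identity $\nabla_j(\nu^{n+1}/u)=-h_j^{\,k}u_k/u^2$, differentiate once more with the hyperbolic connection, and kill the $F^{ij}\nabla_i h_{jk}$ term directly by the Codazzi equation in a space form together with $\nabla_k F(A[\Sigma])=0$. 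The remaining terms are handled by your hyperbolic expression $\nabla_{ij}u = u\nu^{n+1}h_{ij} + 2u_iu_j/u - u g_{ij}$, and the $u_iu_k$ contributions cancel exactly as you anticipate. Your route is somewhat more intrinsic and avoids the Euclidean bookkeeping of \eqref{eq2.190}--\eqref{eq2.220}; the paper's route has the advantage that the intermediate formula \eqref{eq2.220} is reusable and makes the role of the equation's first variation explicit. Both arrive at the same place with comparable effort.
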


\begin{proof}
The first identity follows immediately from \eqref{eq2.100},
\eqref{eq2.130} and assumption \eqref{eq1.90}. To prove
\eqref{eq2.170} we recall the identities in $\bfR^{n+1}$
\begin{equation}
\label{eq2.180}
 \begin{aligned}
        (\nu^{n+1})_i
  \,& = - \thh_{ij} \tg^{jk} u_k, \\
        \tilde{\nabla}_{ij} \nu^{n+1}
  \,& = - \tg^{kl} (\nu^{n+1} \thh_{il} \thh_{kj} + u_l \tnabla_k \thh_{ij}).
\end{aligned}
\end{equation}
By \eqref{eq2.20}, \eqref{eq2.130}, \eqref{eq2.140}, and
$\tg^{ik} = \delta_{jk}/u^2$ we see that
\begin{equation}
\label{eq2.190}
 \begin{aligned}
 F^{ij} \tg^{kl} \thh_{il} \thh_{kj}
   = \,& \frac{1}{u^2} F^{ij} \thh_{ik} \thh_{kj} \\
   = \,& F^{ij} (h_{ik} h_{kj} - 2 \nu^{n+1} h_{ij} + (\nu^{n+1})^2 \delta_{ij}) \\
   = \,& f_i \kappa_i^2 - 2 \nu^{n+1} \sigma + (\nu^{n+1})^2 \sum f_i.
\end{aligned}
\end{equation}

As a hypersurface in $\bfR^{n+1}$,
it follows from \eqref{eq2.30} that $\Sigma$ satisfies
\[ f (u \tkappa_1 + \nu^{n+1}, \ldots, u \tkappa_n + \nu^{n+1}) = \sigma, \]
or equivalently,
\begin{equation}
\label{eq2.200}
F (\{\tg^{ik} (u \thh_{kj} + \nu^{n+1} \tg_{kj})\}) = \sigma.
\end{equation}
Differentiating equation~\eqref{eq2.200}
and using $\tg_{ik} = u^2 \delta_{ik}$, $\tg^{ik} = \delta_{ik}/u^2$, we obtain
\begin{equation}
\label{eq2.210}
 F^{ij} (u \tnabla_k \thh_{ij} + u_k \thh_{ij}
     + (\nu^{n+1})_k u^2 \delta_{ij}) = 0.
\end{equation}
That is,
\begin{equation}
\label{eq2.220}
\begin{aligned}
F^{ij} \tnabla_k \thh_{ij} + \,& (\nu^{n+1})_k u \sum F^{ii}
   = - \frac{u_k}{u} F^{ij} \thh_{ij}  \\
   = \,& - u_k F^{ij} (h_{ij} - \nu^{n+1} \delta_{ij}) \\
   = \,& - u_k \Big(\sigma - \nu^{n+1} \sum f_i\Big).
  \end{aligned}
\end{equation}

Finally, combining \eqref{eq2.80}, \eqref{eq2.160}, \eqref{eq2.180},
\eqref{eq2.190}, \eqref{eq2.220}, and the first identity in \eqref{eq2.90},
 we derive
\begin{equation}
\label{eq2.230}
\begin{aligned}
  F^{ij} \nabla_{ij} \frac{\nu^{n+1}}{u}
  = \,& \nu^{n+1} F^{ij} \nabla_{ij} \frac{1}{u}
        + \frac{|\tnabla u|^2}{u} F^{ij} \thh_{ij}
        - \frac{\nu^{n+1}}{u^3} F^{ij} \thh_{ik} \thh_{kj} \\
  = \,& \frac{\nu^{n+1}}{u} \Big(\sum f_i- \nu^{n+1} \sigma\Big)
        + \frac{|\tnabla u|^2}{u} \Big(\sigma - \nu^{n+1} \sum f_i\Big)  \\
    \,& - \frac{\nu^{n+1}}{u} \Big(f_i \kappa_i^2
     - 2 \nu^{n+1} \sigma + (\nu^{n+1})^2 \sum f_i\Big) \\
  %= \,&  - \frac{\nu^{n+1}}{u} F^{ij} h_{ik} h_{kj}  + \frac{1}{u} F^{ij} h_{ij} \\
  %= \,& \frac{1}{u} \sum f_i (\kappa_i - \nu^{n+1} \kappa_i^2) \\
  = \,& \frac{\sigma}{u} - \frac{\nu^{n+1}}{u} \sum f_i \kappa_i^2.
  \end{aligned}
\end{equation}
This proves \eqref{eq2.170}.

\end{proof}

\section{The asymptotic angle maximum principle and gradient estimates}
\label{sec3}
\setcounter{equation}{0}

In this section we show that the upward unit normal of a solution tends to
a fixed asymptotic angle on approach to the boundary. This implies a global
gradient bound on solutions.\\

\begin{theorem}
\label{th3.1}
Let $\Sigma$ be a smooth strictly locally convex hypersurface in $\bfH^{n+1}$
satisfying equation~\eqref{eq1.10}. Suppose $\Sigma$ is globally a graph:
\[ \Sigma = \{(x, u (x)): x \in \Omega\} \]
where $\Omega$ is a domain in $\bfR^n \equiv \partial \bfH^{n+1}$. Then
\be
\label{eq3.05}
F^{ij}\nabla_{ij}\frac{\sigma - \nu^{n+1}}{u}
   \geq \sigma(1-\sigma)\frac{(\sum f_i -1)}u  \geq 0
\ee
and so,
\begin{equation}
\label{eq3.10}
\frac{\sigma - \nu^{n+1}}{u}
  \leq \sup_{\partial \Sigma} \frac{\sigma - \nu^{n+1}}{u}
\;\; \mbox{on $\Sigma$}.
\end{equation}
Moreover, if $u = \epsilon>0$ on  $\partial \Omega$, then there exists
$\epsilon_0 > 0$ depending only on $\partial \Omega$, such that for all
$\epsilon \leq \epsilon_0$,
\begin{equation}
\label{eq3.20}
\frac{\sigma - \nu^{n+1}}{u}
   \leq \frac{\sqrt{1-\sigma^2}}{r_1}+\frac{\e(1+\sigma)}{r_1^2}
\;\; \mbox{on $\Sigma$}
\end{equation}
where $r_1$ is the maximal radius of exterior tangent spheres to
$\partial \Omega$.
\end{theorem}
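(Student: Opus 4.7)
My plan is to prove the three assertions in order: the subharmonicity inequality \eqref{eq3.05} via a direct computation from Lemma~\ref{lem2.10}, the interior bound \eqref{eq3.10} by the standard weak maximum principle, and the refined boundary bound \eqref{eq3.20} via an explicit spherical cap barrier built from the exterior tangent ball.

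For \eqref{eq3.05}, I would write
\[
F^{ij}\nabla_{ij}\frac{\sigma-\nu^{n+1}}{u} = \sigma F^{ij}\nabla_{ij}\frac{1}{u} - F^{ij}\nabla_{ij}\frac{\nu^{n+1}}{u},
\]
substitute the identities \eqref{eq2.160} and \eqref{eq2.170}, and collect terms to arrive at
\[
F^{ij}\nabla_{ij}\frac{\sigma-\nu^{n+1}}{u} = \frac{1}{u}\Bigl[\sigma\Bigl(\sum f_i - 1\Bigr) + \nu^{n+1}\Bigl(\sum f_i\kappa_i^2 - \sigma^2\Bigr)\Bigr].
\]
The Cauchy--Schwarz inequality applied to $\sigma = \sum f_i\kappa_i$ gives $\sum f_i\kappa_i^2 \geq \sigma^2/\sum f_i$. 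Combined with $\sum f_i \geq 1$ from \eqref{eq1.120} and $0<\nu^{n+1}\leq 1$, a short algebraic reduction shows the bracketed quantity exceeds $\sigma(1-\sigma)(\sum f_i - 1)$ by the nonnegative remainder $\sigma^2(\sum f_i - 1)(\sum f_i - \nu^{n+1})/\sum f_i$, proving \eqref{eq3.05}. The operator $F^{ij}\nabla_{ij}$ is elliptic on $\Sigma$ by \eqref{eq1.50} and the strict local convexity, so the weak maximum principle immediately yields \eqref{eq3.10}.

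For \eqref{eq3.20}, I would reduce to estimating $(\sigma-\nu^{n+1})/u$ at each boundary point $x_0\in\partial\Omega$ using an explicit Euclidean spherical cap tailored to the exterior tangent ball $B_{r_1}(y_0)$ at $x_0$, where $y_0 = x_0 + r_1\vec\nu(x_0)$. Take
\[
u_0(x) = \sigma R_\e - \sqrt{R_\e^2 - |x-y_0|^2}, \qquad |x-y_0|\geq r_1,
\]
with $R_\e = \bigl(\sqrt{r_1^2(1-\sigma^2)+\e^2}-\sigma\e\bigr)/(1-\sigma^2)$, so that $u_0\equiv \e$ on $\{|x-y_0|=r_1\}$. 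A direct application of \eqref{eq1.150} shows that the graph $\Sigma_0 = \mathrm{graph}\,u_0$ has all hyperbolic principal curvatures equal to $\sigma$ and that $(\sigma-\nu_0^{n+1})/u_0\equiv 1/R_\e$ throughout $\Sigma_0$. Since $\Omega\subset\bfR^n\setminus B_{r_1}(y_0)$ and $u_0\geq\e = u$ on $\partial\Omega$ (with equality at $x_0$), a comparison argument gives $u\leq u_0$ in $\Omega$, from which the boundary gradient comparison $|Du(x_0)|\leq|Du_0(x_0)| = r_1/\sqrt{R_\e^2-r_1^2}$ follows. This yields $(\sigma-\nu^{n+1})/u|_{x_0}\leq 1/R_\e$, and an elementary algebraic expansion of $1/R_\e$ in $\e$ produces the stated bound $\sqrt{1-\sigma^2}/r_1+\e(1+\sigma)/r_1^2$, valid for $\e$ sufficiently small depending only on $\partial\Omega$.

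The main obstacle is the comparison step $u\leq u_0$ on $\Omega$, since $u$ and $u_0$ solve the same equation $f(\kappa)=\sigma$ on overlapping but distinct domains and a priori uniqueness of admissible solutions is not available. I would circumvent this by replacing $u_0$ with a family of perturbations $u_0^\delta$ of slightly larger curvature $\sigma+\delta$ (adjusting $R_\e$ accordingly), each of which becomes a strict supersolution of the equation solved by $u$; the strict ordering of curvatures then permits the usual maximum principle comparison for admissible solutions of the concave elliptic equation, and letting $\delta\to 0$ recovers $u\leq u_0$.
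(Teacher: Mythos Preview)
Your proof of \eqref{eq3.05} and \eqref{eq3.10} is essentially identical to the paper's: the same linear combination of the identities \eqref{eq2.160}--\eqref{eq2.170}, the same Cauchy--Schwarz step $\sum f_i\kappa_i^2\ge\sigma^2/\sum f_i$, and the same factorization leading to $\sigma(1-\sigma)(\sum f_i-1)/u$. The paper writes the intermediate line as $\frac{\sigma}{u}(\sum f_i-1)(1-\sigma\nu^{n+1}/\sum f_i)$ rather than isolating your explicit ``remainder'' term, but the content is the same.

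For \eqref{eq3.20} the paper does not argue at all: it simply invokes Lemma~3.2 of \cite{GSS} for the boundary inequality and then applies \eqref{eq3.10}. Your explicit equidistant--sphere barrier is exactly the construction underlying that cited lemma, and your computations that $\kappa_i[\Sigma_0]\equiv\sigma$ and $(\sigma-\nu_0^{n+1})/u_0\equiv 1/R_\e$ are correct. However, the comparison step as you wrote it has a genuine gap: the barrier $u_0(x)=\sigma R_\e-\sqrt{R_\e^2-|x-y_0|^2}$ is only defined on the ball $\{|x-y_0|\le R_\e\}$, and since $R_\e\approx r_1/\sqrt{1-\sigma^2}$ there is no reason this ball contains all of $\Omega$. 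Thus the assertions ``$u_0\ge\e=u$ on $\partial\Omega$'' and ``$u\le u_0$ in $\Omega$'' are not well posed as stated. Your perturbation to curvature $\sigma+\delta$ addresses the lack of a strict comparison principle between two solutions, but it does not cure this domain problem. To close the argument one must restrict the comparison to $\Omega\cap B_{R_\e}(y_0)$ and supply separate control of $u$ on $\partial B_{R_\e}(y_0)\cap\Omega$, for instance via the a~priori height bound coming from an enclosing equidistant sphere over a large ball containing $\Omega$ (so that $u\le\sigma R_\e$ there for $\e$ small), or use a sliding argument with the family of spheres. This is a technical rather than conceptual issue, and once handled your approach reproduces the cited lemma.
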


\begin{proof}
Set $\eta= \frac{\sigma - \nu^{n+1}}{u}$.
By \eqref{eq2.160} and \eqref{eq2.170} we have
\[  F^{ij} \nabla_{ij} \eta
    = \frac{\sigma}{u} \Big(\sum f_i - 1\Big)
    + \frac{\nu^{n+1}}{u} \Big(\sum f_i \kappa_i^2 - \sigma^2\Big). \]

 On the other hand,
\[ \sum \kappa_i^2 f_i
    \geq \frac{(\sum \kappa_i f_i)^2}{\sum f_i}=\frac{\sigma^2}{\sum f_i}.\]
Hence,
\[ F^{ij}\nabla_{ij}\eta \geq \frac{\sigma}{u} \Big(\sum f_i - 1\Big)
               \Big(1-\frac{\sigma \nu^{n+1}}{\sum f_i}\Big)
\geq \frac{\sigma(1-\sigma)}{u} \Big(\sum f_i - 1\Big) \geq 0.\]

So \eqref{eq3.10} follows from the maximum principle, while \eqref{eq3.20}
follows from \eqref{eq3.10} and the approximate asymptotic angle condition,
\[\eta \leq \frac{\sqrt{1-\sigma^2}}{r_1}+\frac{\e(1+\sigma)}{r_1^2}
\;\; \mbox{on $\partial \Sigma$}\]
which is proved in Lemma~3.2 of \cite{GSS}.
\end{proof}

\begin{proposition}
\label{prop3.2}
Let $\Sigma$ be a smooth strictly locally convex graph
\[ \Sigma = \{(x, u (x)): x \in \Omega\} \]
in $\bfH^{n+1}$ satisfying $u\geq \e \, \mbox{in $\Omega$},\,
  u=\e \,\mbox{on $\partial \Omega$}$.
 Then
\be
\label{eq3.30}
 \frac1{ \nu^{n+1}} \leq \max \Big\{\frac{\max_{\Omega}u}u,
                       \max_{\partial \Omega}\frac 1{\nu^{n+1}}\Big\}.
\ee
\end{proposition}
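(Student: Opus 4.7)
The plan is to run the maximum principle on the auxiliary function
\[ \phi(x) \;:=\; \frac{u(x)}{\nu^{n+1}(x)} \;=\; u(x)\,w(x), \qquad w := \sqrt{1 + |Du|^2}, \]
and to show that $\phi$ cannot attain an interior maximum unless $Du$ vanishes there. Once this dichotomy is in hand, the pointwise estimate \eqref{eq3.30} falls out from the boundary values of $\phi$ and from its value at a critical point with $Du=0$.

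Suppose $\phi$ has an interior critical point $x_0$. The condition $\nabla\phi = w\,Du + u\,\nabla w = 0$, combined with $w_i = \sum_k u_k u_{ki}/w$, rearranges to
\[ \sum_k u_k\,u_{ki} \;=\; -\frac{w^2}{u}\,u_i, \qquad \text{so in particular}\qquad (D^2u)(Du,Du) \;=\; -\frac{w^2}{u}\,|Du|^2. \]
By Theorem~\ref{th1.0} the strict local convexity of $\Sigma$ is equivalent to positive-definiteness of the matrix $\{\delta_{ij} + u_i u_j + u\,u_{ij}\}$, so testing against $\xi = Du$ and substituting the critical-point identity yields
\[ 0 \;<\; |Du|^2 + |Du|^4 + u\,(D^2u)(Du,Du) \;=\; |Du|^2\bigl(1 + |Du|^2 - w^2\bigr) \;=\; 0, \]
because $w^2 = 1 + |Du|^2$. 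This contradiction forces $Du(x_0) = 0$, in which case $w(x_0) = 1$ and $\phi(x_0) = u(x_0) \leq \max_\Omega u =: M$.

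Writing $B := \max_{\partial\Omega} 1/\nu^{n+1}$, it follows that $\max_{\ol\Omega}\phi$ is attained either on $\partial\Omega$---where $\phi = \e\,w \leq \e B$---or at an interior critical point with $Du=0$, where $\phi \leq M$. Hence $u\,w \leq \max(M,\,\e B)$ throughout $\ol\Omega$. Dividing by $u$ and using $\e B/u \leq B$ (which holds because $u\geq\e$) gives $w \leq \max(M/u,\,B)$, i.e.\ \eqref{eq3.30}.

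The only genuine content is the recognition that $\phi = u/\nu^{n+1}$ is the correct auxiliary quantity: the strict hyperbolic convexity supplied by Theorem~\ref{th1.0}, tested precisely in the direction $\xi = Du$, exactly annihilates the critical-point identity via the cancellation $1 + |Du|^2 - w^2 = 0$. Notably, this argument uses only the convexity of $\Sigma$ and makes no appeal to the curvature equation~\eqref{eq1.10}, matching the generality in the hypotheses of Proposition~\ref{prop3.2}.
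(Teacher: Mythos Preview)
Your proof is correct and follows essentially the same approach as the paper: both study the auxiliary function $\phi = u w = u/\nu^{n+1}$, observe that at an interior critical point the vanishing of $\nabla\phi$ together with the positive-definiteness of $\{\delta_{ij}+u_i u_j+u u_{ij}\}$ forces $Du=0$, and then read off \eqref{eq3.30}. The only cosmetic difference is that the paper recognizes directly that $\partial_i(uw) = (\delta_{ki}+u_ku_i+uu_{ki})u_k/w$, so $M\cdot Du=0$ and invertibility of $M$ gives $Du=0$, whereas you contract once more with $Du$ to reach the same conclusion via $Du^T M\,Du=0$.
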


\begin{proof}
Let $h=\frac{u}{\nu^{n+1}}=uw$ and suppose that $h$ assumes its maximum at
an interior point $x_0$. Then at $x_0$,
\[ \partial_i h = u_i w+u \frac{u_k u_{ki}}w
                = (\delta_{ki}+u_k u_i+u u_{ki})\frac{u_k}w = 0
\;\; \forall \; 1 \leq i \leq n.\]
Since $\Sigma$ is strictly locally convex, this implies that $\nabla u=0$
at $x_0$ so the proposition follows immediately.
\end{proof}

Combining Theorem~\ref{th3.1} and Proposition~\ref{prop3.2} gives

\begin{corollary}
\label{cor3.3}
Let $\Omega$ be a bounded smooth domain in $\bfR^n$
and $\sigma \in (0, 1)$. Suppose $f$ satisfies
(\ref{eq1.50})-(\ref{eq1.100}) in $K_n^+$. Then for any
$\epsilon > 0$ sufficiently small, any admissible
solution $u^{\epsilon} \in C^\infty (\bar{\Omega})$ of the Dirichlet
problem (\ref{eq1.170}),(\ref{eq1.180'}) satisfies the {\em apriori} estimate
\be
\label{eq3.40}
|\nabla u^{\epsilon}| \leq C
\;\;\; \mbox{in $\Omega$}
\ee
where $C$ is independent of $\epsilon$.
\end{corollary}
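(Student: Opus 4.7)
The plan is to combine the refined angle estimate \eqref{eq3.20} with Proposition~\ref{prop3.2}: each of them controls $w^\epsilon:=\sqrt{1+|Du^\epsilon|^2}=1/\nu^{n+1}$ on the region where the other is weak. Estimate \eqref{eq3.20} directly bounds $w^\epsilon$ where $u^\epsilon$ is small, while Proposition~\ref{prop3.2} bounds $w^\epsilon$ where $u^\epsilon$ is bounded below, once $\max_\Omega u^\epsilon$ and $w^\epsilon|_{\partial\Omega}$ have been controlled.

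First I would establish a uniform height bound. Admissibility of $u^\epsilon$, i.e.\ \eqref{eq1.160}, says that $(u^\epsilon)^2+|x|^2$ is strictly convex on $\Omega$, so its maximum is attained on $\partial\Omega$, where $u^\epsilon=\epsilon$ and $|x|\le R:=\max_{\partial\Omega}|x|$. Hence $\max_\Omega u^\epsilon\le\sqrt{\epsilon^2+R^2}\le M_0$ with $M_0$ independent of $\epsilon$ (for $\epsilon$ bounded).

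Next I would extract a boundary bound on $w^\epsilon$ and invoke Proposition~\ref{prop3.2}. Plugging $u^\epsilon=\epsilon$ into \eqref{eq3.20} gives
\[
\sigma-\nu^{n+1}\big|_{\partial\Omega}\le \epsilon\Big(\tfrac{\sqrt{1-\sigma^{2}}}{r_1}+\tfrac{\epsilon(1+\sigma)}{r_1^{2}}\Big),
\]
so for $\epsilon$ small enough $\nu^{n+1}|_{\partial\Omega}\ge\sigma/2$ and hence $w^\epsilon|_{\partial\Omega}\le 2/\sigma$. Feeding this and the height bound into Proposition~\ref{prop3.2} produces
\[
w^\epsilon(x)\le\max\Big\{\frac{M_0}{u^\epsilon(x)},\,\frac{2}{\sigma}\Big\}\quad\text{for all }x\in\Omega.
\]

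Finally I would split $\Omega$ into two regions using \eqref{eq3.20} once more. Set $C_0:=\sqrt{1-\sigma^{2}}/r_1+\epsilon(1+\sigma)/r_1^{2}$ and $t_0:=\sigma/(2C_0)$; note $t_0$ is bounded below uniformly in small $\epsilon$. Where $u^\epsilon\le t_0$, the pointwise form of \eqref{eq3.20}, $\nu^{n+1}\ge\sigma-u^\epsilon C_0$, already yields $\nu^{n+1}\ge\sigma/2$ and hence $w^\epsilon\le 2/\sigma$. Where $u^\epsilon>t_0$, the inequality of the previous paragraph gives $w^\epsilon\le\max\{M_0/t_0,\,2/\sigma\}$. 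Combining the two regions yields a uniform bound on $w^\epsilon$, equivalently on $|\nabla u^\epsilon|$. No serious obstacle arises: \eqref{eq3.20} and Proposition~\ref{prop3.2} are each strong precisely where the other is weak, and the threshold $t_0$ separating them is uniform in $\epsilon$.
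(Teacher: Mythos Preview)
Your proof is correct and follows exactly the approach the paper indicates: the paper simply writes ``Combining Theorem~\ref{th3.1} and Proposition~\ref{prop3.2} gives'' the corollary, and you have supplied the natural details---the uniform height bound from convexity of $u^2+|x|^2$, the boundary control of $\nu^{n+1}$ from \eqref{eq3.20}, and the splitting into $\{u^\epsilon\le t_0\}$ and $\{u^\epsilon>t_0\}$ so that each of the two ingredients handles one region. The only tacit point is that Proposition~\ref{prop3.2} is stated under the hypothesis $u^\epsilon\ge\epsilon$, but this is standard (and in any case its proof yields $u^\epsilon w^\epsilon\le\max\{M_0,\epsilon\max_{\partial\Omega}w^\epsilon\}$ without that hypothesis, which is all you use).
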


\section{Curvature estimates}
\label{sec4}
\setcounter{equation}{0}

In this section we prove a maximum principal for the largest principal
curvature of locally strictly convex graphs satisfying $f(\kappa)=\sigma$. \\

Let $\Sigma$ be a smooth hypersurface in $\bfH^{n+1}$ satisfying
$f(\kappa)=\sigma$.
For a fixed point $\vx_0 \in \Sigma$ we choose a local orthonormal frame
$\tau_1, \ldots, \tau_n$
around $\vx_0$ such that $h_{ij} (\vx_0) = \kappa_i \delta_{ij}$.
The calculations below are done at $\vx_0$. For convenience we shall
write $v_{ij} = \nabla_{ij} v$,  $h_{ijk} = \nabla_k h_{ij}$,
$h_{ijkl} = \nabla_{lk} h_{ij} = \nabla_l \nabla_k h_{ij}$, etc.\\

Since $\bfH^{n+1}$ has constant sectional curvature $-1$, by the Codazzi
and Gauss equations we have $ h_{ijk} = h_{ikj}$ and
\begin{equation}
\label{eq4.10}
\begin{aligned}
 h_{iijj} = \,& h_{jjii} + (h_{ii} h_{jj} - 1) (h_{ii} - h_{jj}) \\
          = \,& h_{jjii} + (\kappa_i \kappa_j - 1) (\kappa_i - \kappa_j).
 \end{aligned}
 \end{equation}
Consequently for each fixed $j$,
\begin{equation}
\label{eq4.20}
 F^{ii} h_{jjii} =  F^{ii} h_{iijj} + (1 + \kappa_j^2) \sum f_i \kappa_i
     - \kappa_j \sum f_i -  \kappa_j \sum \kappa_i^2  f_i.
%\;\; \forall \, 1 \leq j \leq n.
\end{equation}

\begin{theorem}
\label{th4.10}
Let $\Sigma$ be a smooth strictly locally convex graph in $\bfH^{n+1}$
satisfying $f(\kappa)=\sigma$ and
\begin{equation}
\label{eq4.30}
\nu^{n+1} \geq 2 a > 0 \; \mbox{on $\Sigma$}.
\end{equation}
For $\vx \in \Sigma$ let $\kappa_{\max} (\vx)$ be the largest principal
curvature of $\Sigma$ at $\vx$. Then
\begin{equation}
\max_{\Sigma} \, \frac{\kappa_{\max}}{\nu^{n+1}-a} \leq
  \max \Big\{\frac{4n}{a^3}, \max_{\partial \Sigma} \,
                   \frac{\kappa_{\max}}{\nu^{n+1}-a}\Big\}.
\end{equation}
\end{theorem}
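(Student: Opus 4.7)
The plan is to apply the maximum principle to the test function
$$W = \log h_{11} - \log(\nu^{n+1} - a)$$
on $\Sigma$, where at each point a local orthonormal hyperbolic frame is chosen so that $\{h_{ij}\}$ is diagonal with $h_{11}$ the largest eigenvalue. Since $\kappa_{\max}$ may fail to be smooth at points of multiplicity, I would follow the standard device of replacing $h_{11}$ by $h_{ij}\xi^i\xi^j$ for a smooth unit vector field $\xi$ extending $\tau_1$; the resulting function agrees with $\log h_{11}$ at the base point but lies below it nearby, which preserves the role of the base point as an interior maximizer of the comparison. If the maximum of $W$ is attained on $\partial\Sigma$ there is nothing to prove; otherwise let $\vx_0$ be an interior maximizer. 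The first-order condition $\nabla_iW=0$ reads
$$\frac{h_{11,i}}{h_{11}} = \frac{(\nu^{n+1})_i}{\nu^{n+1}-a}$$
at $\vx_0$, so the quadratic gradient contributions in $F^{ii}\nabla_{ii}\log h_{11}$ and $F^{ii}\nabla_{ii}\log(\nu^{n+1}-a)$ cancel exactly. Consequently $F^{ii}\nabla_{ii}W\leq 0$ reduces to
$$\frac{F^{ii}h_{11,ii}}{h_{11}} \;\leq\; \frac{F^{ii}\nabla_{ii}\nu^{n+1}}{\nu^{n+1}-a}.$$

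For the left side, differentiating $F(\{h_{ij}\})=\sigma$ twice along $\tau_1$ gives $F^{ii}h_{ii,11}=-F^{ij,kl}h_{ij,1}h_{kl,1}\geq 0$ by the concavity assumption \eqref{eq1.60}. Substituting this into the commutator identity \eqref{eq4.20} with $j=1$, together with $\sum f_i\kappa_i=\sigma$, yields
$$F^{ii}h_{11,ii} \;\geq\; (1+\kappa_1^2)\sigma - \kappa_1\sum f_i - \kappa_1\sum f_i\kappa_i^2.$$
For the right side I would write $\nu^{n+1} = u\cdot(\nu^{n+1}/u)$, expand $\nabla_{ii}\nu^{n+1}$ via the Leibniz rule, and apply the two identities of Lemma~\ref{lem2.10} together with \eqref{eq2.100} and the Euclidean formula $|\tnabla u|^2 = 1-(\nu^{n+1})^2$ from \eqref{eq2.90}. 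The resulting expression has dominant term $-\nu^{n+1}\sum f_i\kappa_i^2$, with the remaining terms controlled by $\sigma$, $\nu^{n+1}$, and $\sum f_i$.

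Combining these in the reduced inequality and multiplying through by $\kappa_1(\nu^{n+1}-a)$, the two occurrences of $\sum f_i\kappa_i^2$ produce a net coefficient $a\kappa_1/(\nu^{n+1}-a)$, which is non-negative since \eqref{eq4.30} forces $\nu^{n+1}-a\geq a > 0$. Dropping this favorable term together with the positive term $(1+\kappa_1^2)\sigma/h_{11}$ on the left, and absorbing the contributions from $\sum f_i$ using the structural bound $\sum f_i\geq 1$ of \eqref{eq1.120}, leads to an inequality of the form $\kappa_1 \leq (C/a^3)(\nu^{n+1}-a)$; careful tracking of the numerical constants produces $C=4n$, which is the asserted bound.

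The main technical obstacle is the meticulous bookkeeping of the many lower-order terms that arise from the Leibniz expansion of $\nabla_{ii}\nu^{n+1}$ and from the commutator identity \eqref{eq4.20}, together with tracking the dependence of all constants on $a$ through the denominator $\nu^{n+1}-a$. The hypothesis $\nu^{n+1}\geq 2a$ is used precisely to keep $\nu^{n+1}/(\nu^{n+1}-a)$ a definite amount larger than $1$, so that the combined $\sum f_i\kappa_i^2$-coefficient after transposition has the good sign that permits discarding it. The structural hypotheses that enter are \eqref{eq1.50}, \eqref{eq1.60}, and the consequence \eqref{eq1.120}; the growth condition \eqref{eq1.100} plays no role here.
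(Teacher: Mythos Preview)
Your outline contains a genuine gap at the step where you discard the second-derivative concavity term $-F^{ij,kl}h_{ij,1}h_{kl,1}$ as merely $\geq 0$. After you compute $F^{ii}\nabla_{ii}\nu^{n+1}$ from Lemma~\ref{lem2.10}, the expression (this is the paper's Lemma~\ref{lem4.30}) contains the gradient cross term
\[
\frac{2}{u}\,F^{ij}\nabla_i u\,\nabla_j\nu^{n+1}
= 2\sum_i f_i\,\frac{u_i^2}{u^2}\,(\nu^{n+1}-\kappa_i).
\]
For indices with $\kappa_i<\nu^{n+1}$ this contributes on the wrong side, and after dividing by $\nu^{n+1}-a$ it is bounded only by $\tfrac{2}{a}\sum f_i$. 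The sole favourable $\sum f_i$ term available is $\tfrac{a}{\nu^{n+1}-a}\sum f_i$, whose coefficient is at most $\tfrac{a}{a}=1$; since $a\leq\tfrac12$ this cannot absorb $\tfrac{2}{a}\sum f_i$. There is no a~priori upper bound on $\sum f_i$ under the hypotheses \eqref{eq1.50}--\eqref{eq1.100} (e.g.\ for $f=H_n^{1/n}$ one has $\sum f_i=\tfrac{\sigma}{n}\sum\kappa_j^{-1}$, which blows up when some $\kappa_j\to 0$), so invoking the lower bound $\sum f_i\geq 1$ of \eqref{eq1.120} is the wrong direction and cannot close the estimate. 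Your proposal to also drop the term $(1+\kappa_1^2)\sigma/\kappa_1\sim\sigma\kappa_1$ removes the only positive term that grows with $\kappa_1$, which makes the situation worse.

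The paper resolves exactly this difficulty by \emph{not} discarding the concavity term but instead using the sharper Andrews--Gerhardt inequality
\[
-F^{ij,kl}h_{ij,1}h_{kl,1}\;\geq\;2\sum_{i\geq 2}\frac{f_i-f_1}{\kappa_1-\kappa_i}\,h_{i11}^2,
\]
and then substituting the first-order condition $h_{11i}=\tfrac{\kappa_1}{\nu^{n+1}-a}\,\tfrac{u_i}{u}(\nu^{n+1}-\kappa_i)$ to convert this into a quadratic expression in the very quantities $\tfrac{u_i}{u}(\kappa_i-\nu^{n+1})$ that make up the bad cross term. A splitting of indices into $J=\{f_i<\theta^{-1}f_1\}$ (handled via $\kappa_1 f_1\leq\sigma$) and $L=\{f_i\geq\theta^{-1}f_1\}$ (handled by the Andrews--Gerhardt contribution) with $\theta=a^2/4$ then absorbs everything and yields the stated bound $4n/a^3$. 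This mechanism is the missing idea in your proposal.
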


\begin{proof}
Let
\begin{equation}
\label{eq4.35}
 M_0 = \max_{\vx \in \Sigma} \frac{\kappa_{\max} (x) }{\nu^{n+1} - a}.
\end{equation}
Assume $M_0 > 0$ is attained at an interior point $\vx_0 \in \Sigma$.
Let $\tau_1, \ldots, \tau_n$ be a local orthonormal frame
around $\vx_0$ such that $h_{ij} (\vx_0) = \kappa_i \delta_{ij}$,
where $\kappa_1, \ldots, \kappa_n$ are the principal curvatures of $\Sigma$
at $\vx_0$. We may assume
$\kappa_1 = \kappa_{\max} (\vx_0)$.
Thus, at $\vx_0,\, \frac{h_{11}}{\nu^{n+1}-a}$ has a local maximum. Therefore,
\begin{eqnarray}
\label{eq4.40}
\frac{h_{11i}}{h_{11}} - \frac{\nabla_i \nu^{n+1}}{\nu^{n+1}-a} = 0, \\
\label{eq4.45}
\frac{h_{11ii}}{h_{11}}-\frac{\nabla_{ii} \nu^{n+1}}{\nu^{n+1}-a}\leq 0.
\end{eqnarray}

Using \eqref{eq4.20}, we find after differentiating the equation
$F(h_{ij})=\sigma$ twice that
\begin{lemma}
\label{lem4.20}
At $\vx_0$,
\begin{equation}
\label{eq4.50}
F^{ii}h_{11ii}= - F^{ij,rs}h_{ij1} h_{rs1} + \sigma (1 + \kappa_1^2)
                - \kappa_1 \sum f_i - \kappa_1 \sum \kappa_i^2 f_i.
\end{equation}
\end{lemma}

By Lemma \ref{lem2.10} we immediately derive

\begin{lemma}
\label{lem4.30}
Let $\Sigma$ be a smooth hypersurface in $\bfH^{n+1}$ satisfying
$f(\kappa)=\sigma$. Then in a local orthonormal frame,
\begin{equation}
\label{eq4.60}
  F^{ij} \nabla_{ij} \nu^{n+1} = \frac2{u} F^{ij}\nabla_i u \nabla_j \nu^{n+1}
+\sigma(1+ (\nu^{n+1})^2)- \nu^{n+1}\Big(\sum f_i+\sum f_i \kappa_i^2\Big).
\end{equation}
\end{lemma}

Using Lemma \ref{lem4.20} and Lemma \ref{lem4.30} we find from \eqref{eq4.45}
\begin{equation}
\label{eq4.70}
\begin{aligned}
  0 \geq \,& -F^{ij,rs}h_{ij1}h_{rs1}+\sigma \Big(1+\kappa_1^2
  -\frac{1+(\nu^{n+1})^2}{\nu^{n+1}-a} \kappa_1\Big)\\
  +\, & \frac{a\kappa_1}{\nu^{n+1}-a} \Big(\sum f_i +\sum \kappa_i^2 f_i\Big)
  -\frac{2\kappa_1}{\nu^{n+1}-a} F^{ij}\frac{u_i}{u} \nabla_j \nu^{n+1}.
   \end{aligned}
 \end{equation}
 Next we use an inequality due
to Andrews~\cite{Andrews94} and Gerhardt~\cite{Gerhardt96} which states
%(see also \cite{Urbas02}, Lemma 3.1 and \cite{SUW04}),
\begin{equation}
\label{eq4.80}
 %F^{ii} a_{ii,11}
 - F^{ij,kl} h_{ij1} h_{kl,1}
 \geq  \sum_{i \neq j} \frac{f_i - f_j}{\kappa_j - \kappa_i} h_{ij1}^2
 \geq 2 \sum_{i\geq 2} \frac{f_i - f_1}{\kappa_1 - \kappa_i} h_{i11}^2.
\end{equation}

Recall that (see \eqref{eq2.180})
  \[\nabla_i \nu^{n+1}=\frac{u_i}u (\nu^{n+1}-\kappa_i).\]
Then at $\vx_0$,  we obtain from \eqref{eq4.40}
\begin{equation} \label{eq4.90}
  h_{11i}  = \frac{\kappa_1}{\nu^{n+1}-a}\frac{u_i}u (\nu^{n+1}-\kappa_i).
  \end{equation}
 Inserting this into \eqref{eq4.80} we derive
\begin{equation}
\label{eq4.100}
 - F^{ij,kl} h_{ij1} h_{kl,1}
  \geq 2 \Big(\frac{\kappa_1}{\nu^{n+1}-a}\Big)^2
         \sum_{i\geq 2} \frac{f_i-f_1}{\kappa_1-\kappa_i}
         \,\frac{u_i^2}{u^2}(\kappa_i-\nu^{n+1})^2.
 \end{equation}

Note that we may write
\begin{equation} \label{eq4.110}
\sum f_i +\sum \kappa_i^2 f_i=(1-(\nu^{n+1})^2)\sum f_i
+\sum (\kappa_i-\nu^{n+1})^2 f_i +2\sigma \nu^{n+1}.
\end{equation}

Combining \eqref{eq4.80}, \eqref{eq4.100} and \eqref{eq4.110} gives
\begin{equation}
\label{eq4.120}
\begin{aligned}
0 \geq \,& \sigma \Big(1 + \kappa_1^2
           - \frac{(1+(\nu^{n+1})^2)}{\nu^{n+1}-a} \kappa_1\Big) \\
       \,& + \frac{a\kappa_1}{\nu^{n+1}-a} \Big((1-(\nu^{n+1})^2) \sum f_i
           + \sum (\kappa_i-\nu^{n+1})^2 f_i +2\sigma \nu^{n+1}\Big)\\
       \,& + 2\frac{\kappa_1}{\nu^{n+1}-a}
             \sum f_i \frac{u_i^2}{u^2} (\kappa_i-\nu^{n+1})\\
       \,& + 2\frac{\kappa_1^2}{(\nu^{n+1}-a)^2}\sum_{i\geq 2}
             \frac{f_i-f_1}{\kappa_1-\kappa_i} \frac{u_i^2}{u^2}
             (\kappa_i-\nu^{n+1})^2.
     \end{aligned}
 \end{equation}

Note that (assuming $\kappa_1 \geq \frac2{a}$) all the terms of
\eqref{eq4.120} are positive except possibly the ones in the sum involving
$(\kappa_i-\nu^{n+1})$ and only if $\kappa_i <\nu^{n+1}$.

Therefore define
\[  \begin{aligned}
J & = \{i:  \kappa_i - \nu^{n+1} < 0,
            \; f_i < \theta^{-1} f_1\}, \\
L & = \{i:  \kappa_i - \nu^{n+1} < 0,
            \; f_i \geq \theta^{-1} f_1\},
  \end{aligned} \]
where $\theta \in (0, 1)$ is to be chosen later.
Since $\sum u_i^2/u^2 = |\tilde{\nabla} u|^2 = 1 - (\nu^{n+1})^2 \leq 1$
and $\kappa_1 f_1 \leq \sigma$, %see \eqref{gs8-F105},
we have
\begin{equation}
\label{eq4.140}
 \sum_{i \in J} (\kappa_i - \nu^{n+1}) f_i \frac{u_i^2}{u^2}
  \geq -\frac{n}{\theta} f_1 \geq -\frac{n\sigma}{\theta \kappa_1}.
\end{equation}

Finally,
\begin{equation}
\label{eq4.150}
\begin{aligned}
 \frac{2 \kappa_1^2}{\nu^{n+1}-a}
     \,& \sum_{i\in L} \frac{f_i - f_1}{\kappa_1 - \kappa_i}
         \frac{u_i^2}{u^2}(\kappa_i-\nu^{n+1})^2 \\
\geq \,& \frac{2 (1 - \theta) \kappa_1}{\nu^{n+1}-a}
         \sum_{i \in L} (\kappa_i - \nu^{n+1})^2 f_i \frac{u_i^2}{u^2} \\
   = \,& - 2 \kappa_1 \sum_{i\in L}f_i \frac{u_i^2}{u^2}(\kappa_i-\nu^{n+1})
         - \frac{2\theta \kappa_1}{\nu^{n+1}-a}
         \sum_{i \in L} (\kappa_i - \nu^{n+1})^2 f_i \frac{u_i^2}{u^2} \\
     \,& + \frac{2 \kappa_1}{\nu^{n+1}-a} \sum_{i\in L} f_i
         \frac{u_i^2}{u^2}(\kappa_i^2-(a+\nu^{n+1})\kappa_i+a\nu^{n+1})\\
\geq \,& - 2\kappa_1 \sum_{i\in L} f_i \frac{u_i^2}{u^2}(\kappa_i-\nu^{n+1})
         - \frac{2\theta}{a} \kappa_1 \sum_{i \in L} (\kappa_i - \nu^{n+1})^2
         f_i -\frac{6\sigma}{a}\kappa_1.
\end{aligned}
\end{equation}
In deriving the last inequality in \eqref{eq4.150} we have used that
$\kappa_i  f_i \leq \sigma$ for each $i$ and that $\nu^{n+1} \geq 2a$.
We now fix $\theta=\frac{a^2}4$.
From (\ref{eq4.140}) and (\ref{eq4.150}) we see that the right hand side of
\eqref{eq4.120} is strictly positive provided that $\kappa_1 > \frac{4n}{a^2}$,
completing the proof of Theorem \ref{th4.10}.
\end{proof}

\section{Uniqueness and foliations}
\label{sec5}
\setcounter{equation}{0}

In this section we identify a class of curvature functions for which there
is uniqueness. This implies that for these curvature functions and smooth
asymptotic boundaries $\Gamma$ which are Jordan, there is a foliation of
each component of $\bfH^{n+1}\setminus \mathcal{C}(\Gamma)$ (the complement
of the hyperbolic convex hull of $\Gamma$) by solutions $f(\kappa)=\sigma$
as $\sigma$ varies between 0 and 1.\\

\begin{theorem}
\label{th5.10}
Let $f(\kappa)$ satisfy \eqref{eq1.50}-\eqref{eq1.100} in the positive cone
$K_n^{+} $ and in addition satisfy
\begin{equation}
\label{eq5.10}
\sum_i f_i > \sum \lambda_i^2 f_i \,\,\mbox{in  $K_n^{+} \cap \{0<f<1\}$}.
\end{equation}
Let $\Sigma_i, \, i=1,2 $ be strictly locally convex hypersurfaces
(oriented up) in $\bfH^{n+1} $ satisfying
$f (\kappa) = \sigma_i \in (0,1), \, \sigma_1 \leq \sigma_2$,
with the same boundary in the horosphere $x_{n+1}=\epsilon$ or with the same
asymptotic boundary $\Gamma=\partial \Omega$. Then $\Sigma_2$ lies below
$\Sigma_1$, that is,  if $\Sigma_i$ are represented as graphs $x_{n+1}=u_i(x)$
over $\Omega \subset R^n$, then  $u_2 \leq u_1 \,\,\mbox{in $\Omega$}$.
\end{theorem}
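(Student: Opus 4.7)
I would argue by contradiction with a maximum principle. Assume the set $D:=\{x\in\Omega : u_2(x)>u_1(x)\}$ is nonempty. In the horospherical case $D$ is automatically compactly contained in $\Omega$. In the asymptotic case, the boundary identity $\sqrt{1+|Du_i|^2}=1/\sigma_i$ from \eqref{eq1.200} together with $u_i\in C^1(\ol\Omega)$ forces
\[
\lim_{x\to\partial\Omega}\frac{u_2(x)}{u_1(x)} = \frac{\sigma_1\sqrt{1-\sigma_2^2}}{\sigma_2\sqrt{1-\sigma_1^2}} \le 1,
\]
so again $D$ is compactly contained in $\Omega$. In either case the comparison $u_2\le u_1$ reduces to ruling out an interior maximum of a suitable test function.

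The test function I would use is $\psi := \log(u_2/u_1)$ on $\ol D$. Then $\psi \le 0$ on $\partial D$ and $\psi$ attains a positive interior maximum at some $x_0 \in D$. The extremum conditions read
\[
Du_2(x_0) = \tfrac{u_2}{u_1}(x_0)\,Du_1(x_0), \qquad D^2 u_2(x_0) \le \tfrac{u_2}{u_1}(x_0)\,D^2 u_1(x_0).
\]
Write $\phi := u_2(x_0)/u_1(x_0) > 1$. Because $Du_2$ is parallel to $Du_1$ at $x_0$, both Euclidean first fundamental forms $g^{e,i}=I+Du_i\otimes Du_i$ are simultaneously diagonalized in an orthonormal frame aligned with $Du_1/|Du_1|$, while the second-order inequality above gives a corresponding bound on the Euclidean second fundamental forms. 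Translating via $\kappa_i = u\kappa_i^e + \nu^{n+1}$ from \eqref{eq1.150} yields componentwise control of the hyperbolic principal curvatures $\kappa_i(\Sigma_2)$ in terms of $\lambda_i := \kappa_i(\Sigma_1)$, $\phi$, and the Euclidean geometry of $\Sigma_1$ at $x_0$.

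Combining with the concavity of $f$ applied at $\lambda$,
\[
\sigma_2 - \sigma_1 = f(\kappa(\Sigma_2)) - f(\lambda) \le \sum_i f_i(\lambda)\bigl(\kappa_i(\Sigma_2) - \lambda_i\bigr),
\]
and using the homogeneity identity $\sum f_i\lambda_i = \sigma_1$ from \eqref{eq1.110}, the right-hand side should, after regrouping, reduce to a dominant multiple of $(\phi^2-1)\bigl(\sum \lambda_i^2 f_i - \sum f_i\bigr)$. By hypothesis \eqref{eq1.240} the bracketed quantity is strictly negative on $\{0<f<1\}$, so the right-hand side is strictly negative. This contradicts $\sigma_2 \ge \sigma_1$, forcing $D = \emptyset$. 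When $\sigma_1 = \sigma_2$ we conclude $u_2 \le u_1$, and swapping the roles of $u_1, u_2$ gives equality $u_1 \equiv u_2$.

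\textbf{Main obstacle.} The analytic heart of the argument is the algebraic identification of the coefficient of $(\phi^2-1)$ as the expression $\sum \lambda_i^2 f_i - \sum f_i$. The $\sum f_i$ contribution arises from rescaling the $u\kappa_i^e$ piece of $\kappa_i$ when $u_1$ is replaced by $u_2 = \phi u_1$, while the $\sum \lambda_i^2 f_i$ contribution arises from the decrease of $\nu^{n+1}$ induced by $Du_1 \mapsto \phi Du_1$, via precisely the mechanism producing \eqref{eq2.190} in Lemma~\ref{lem2.10}. Verifying that these two contributions combine to give exactly the bracket appearing in \eqref{eq1.240} is the main computational step, and is the reason this specific uniqueness condition is imposed. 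A secondary subtlety is the asymptotic boundary analysis, where the exact boundary gradient \eqref{eq1.200} is needed to guarantee that the supremum of $\psi$ is not approached as $x \to \partial\Omega$.
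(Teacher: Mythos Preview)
Your approach is genuinely different from the paper's, and the gap you yourself flag as ``the main computational step'' is real and does not close the way you expect.

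The paper does not use a pointwise PDE comparison at a maximum of $\log(u_2/u_1)$. Instead it argues geometrically (following Schlenker): assume $\Sigma_2$ protrudes above $\Sigma_1$, take the point $P\in\Sigma_2$ of maximal hyperbolic distance $t^*$ from $\Sigma_1$, and flow $\Sigma_2$ by its \emph{hyperbolic normal} to form the parallel hypersurfaces $\Sigma_2^t$. Along this flow the principal curvatures satisfy the well-known ODE $\kappa_i'(t)=1-\kappa_i^2$, so
\[
\frac{d}{dt}f(\kappa(t))=\sum f_i(1-\kappa_i^2)=\sum f_i-\sum \kappa_i^2 f_i,
\]
which is strictly positive precisely by hypothesis~\eqref{eq5.10}. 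Hence $\Sigma_2^{t^*}$ is a strict subsolution touching $\Sigma_1$ from below at a single interior point, contradicting the maximum principle. The expression $\sum f_i-\sum\lambda_i^2 f_i$ thus arises \emph{intrinsically} from the hyperbolic parallel flow, not from any Euclidean rescaling.

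Your test function $\log(u_2/u_1)$ is the \emph{vertical} hyperbolic distance between the graphs, not the normal distance. The first- and second-order conditions at its maximum translate into the substitution $(u,Du,D^2u)\mapsto(\phi u,\phi Du,\phi D^2u)$ in the operator $G$, and the linearization of $G$ in this direction is \emph{not} $\sum f_i-\sum\lambda_i^2 f_i$. Already for $n=1$ (where $f(\lambda)=\lambda$ and \eqref{eq5.10} reads $\lambda^2<1$) a direct computation gives the first-order coefficient $3\sigma_1(\nu^{n+1})^2-\sigma_1-2(\nu^{n+1})^3$, which, while still negative for $\sigma_1<1$, is not a multiple of $\sigma_1^2-1$. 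So the mechanism you describe---``$\sum f_i$ from rescaling $u\kappa^e$, $\sum\lambda_i^2 f_i$ from the change in $\nu^{n+1}$ via \eqref{eq2.190}''---does not produce the claimed bracket. In higher dimensions there is no reason to expect the resulting expression to be controlled by \eqref{eq5.10}, and your argument as written does not establish the comparison. If you want an analytic proof that lands on exactly \eqref{eq5.10}, you must work with the normal hyperbolic distance (equivalently, compare at the maximum of $d_{\bfH}(\cdot,\Sigma_1)$ over $\Sigma_2$), which is what the paper's parallel-hypersurface argument does implicitly.
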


\begin{proof}
We build on an idea of Schlenker \cite{SCH}.
Suppose for contradiction that $\Sigma_2$ contains points in the unbounded
region of $\bfR^{n+1}_+ \setminus \Sigma_1$ and let $P$ be a point of
$\Sigma_2$ farthest from $\Sigma_1$ (necessarily $P$ is not a boundary point)
where the maximal distance, say $t^*$ is achieved. Then the local parallel
hypersurfaces $\Sigma_2^t $ to $\Sigma_2$ obtained by moving a distance $t$
(on the concave side of $\Sigma_2$ near $P$) {\em are convex} and contact
$\Sigma_1$ at a point $Q$ in $\Sigma_1$ when $t=t^*$. Moreover
$\Sigma_2^{t^*}$ {\em locally lies below $\Sigma_1$} by the maximality
of the distance $t^*$. We claim that the distance function $d(x,\Sigma_2)$
is smooth in a neighborhood of $Q$. To show this we need only show
(see \cite{LN}) that $P$ is the unique closest point to $Q$ on $\Sigma_2$.
If $P^{\prime}$ was a second point of $\Sigma_2$ at distance $t^*$
from $Q$, then the local parallel hypersurfaces $\Sigma_2^t $ to $\Sigma_2$
obtained by moving a distance $t$ (on the concave side of $\Sigma_2$ near
$P^{\prime}$) are also convex and when $t=t^*$, contact $\Sigma_1$ at $Q$
{\em and also locally lies below $\Sigma_1$} by the previous argument. This
is clearly impossible since $\Sigma_1$ has a unique tangent plane at $Q$.\\

The principal curvatures of $\Sigma_2^t$ at points along the normal geodesic
emanating from any point of $\Sigma_1$ (say near $P$) are given by the ode
(see \cite{Gray}):
\[ \kappa_i^{\prime}(t) = 1-\kappa_i^2.\]
In particular, if $\kappa_i(0)< 1$, then $\kappa_i(0)\leq \kappa_i(t)<1$
while if $\kappa_i(0)>1$, then $1 <\kappa_i(t) \leq \kappa_i(0)$.
Of course if $\kappa_i(0)=1$ , then $\kappa_i(t)\equiv 1$.
Moreover by \eqref{eq5.10},
\begin{equation}
\label{eq5.30}
\frac{d}{dt} f(\kappa)(t)=\sum f_i-\sum k_i^2 f_i >0
\,\,\mbox{in  $K_n^{+} \cap \{0<f<1\}$}.
\end{equation}

It follows that the $\Sigma_2^t$ satisfy $f(\kappa)>\sigma_2$ and so
are strict subsolutions of the equation $f(\kappa)=\sigma_1$.
On the other hand at $t=t*$ we have $\Sigma_2^t$ lies below $\Sigma_1$
but touches $\Sigma_1$ at $Q$ violating the maximum principle.
\end{proof}

\begin{corollary}
\label{cor5.20}
Let $f(\kappa)$ satisfy \eqref{eq1.50}-\eqref{eq1.100} in the positive cone
$K_n^{+} $ and in addition satisfy \eqref{eq5.10}.
Let $\Sigma_i, \, i=1,2 $ be strictly locally convex graphs (oriented up)
in $\bfH^{n+1} $over $\Omega \subset \bfR^n$ satisfying
$f(\kappa)=\sigma \in (0,1)$ with the same boundary in the horosphere
$x_{n+1}=\epsilon$ or with the same asymptotic boundary
$\Gamma=\partial \Omega$. Then $\Sigma_1=\Sigma_2$.
\end{corollary}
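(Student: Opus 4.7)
The plan is to deduce this uniqueness statement as an immediate double application of Theorem~\ref{th5.10}. Since both $\Sigma_1$ and $\Sigma_2$ satisfy the \emph{same} equation $f(\kappa)=\sigma$ with the same boundary data, I can set $\sigma_1=\sigma_2=\sigma$ in the theorem, which satisfies the hypothesis $\sigma_1\leq\sigma_2$ trivially. The theorem's conclusion then gives $u_2\leq u_1$ on $\Omega$.

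Next I would reverse the roles, applying Theorem~\ref{th5.10} with $\Sigma_1$ and $\Sigma_2$ interchanged. The hypothesis $\sigma_1\leq\sigma_2$ is still satisfied (both sides equal $\sigma$), and the same boundary data condition is symmetric in the two surfaces, so the conclusion yields $u_1\leq u_2$ on $\Omega$.

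Combining the two inequalities forces $u_1\equiv u_2$, hence $\Sigma_1=\Sigma_2$. There is essentially no obstacle: the real work was already done in Theorem~\ref{th5.10}, where the Schlenker-style parallel hypersurface argument together with the structural inequality \eqref{eq5.10} drives the monotonicity. The only thing worth checking carefully is that Theorem~\ref{th5.10} is formulated symmetrically enough to allow the role reversal, i.e.\ that strict local convexity and the boundary condition (either on the horosphere $x_{n+1}=\epsilon$ or the asymptotic boundary $\Gamma=\partial\Omega$) are hypotheses on both surfaces equally; inspection of the statement confirms this, so the corollary follows at once.
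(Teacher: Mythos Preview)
Your proposal is correct and is precisely the intended argument: the paper states Corollary~\ref{cor5.20} immediately after Theorem~\ref{th5.10} without further proof, leaving the double application of the comparison (with $\sigma_1=\sigma_2=\sigma$) as the obvious deduction. Nothing more is needed.
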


\begin{example}
\label{ex5.30}
For $l-1$ or $l-2$, let $f=(\frac{H_n}{H_l})^{\frac1{n-l}}$ in the cone
$K_n^+ \subset \bfR^n$. Then (see Lemma 2.14 of \cite{Spruck03})
\[ \begin{aligned}
  f_i &\, = \frac{f}{n-l} \Big(\frac1{\lambda_i}-(\log{H_l})_i\Big)
      \frac{f}{n-l} \Big(\frac1{\lambda_i}-\frac{H_{l-1;i}}{H_l}\Big),
\end{aligned} \]
where $H_{l-1;i}=H_{l-1}|_{\lambda_i=0}$.
Hence,
\be \label{eq5.40}
\sum f_i=\frac{f}{n-l} \Big(n\frac{H_{n-1}}{H_n}-l\frac{H_{l-1}}{H_l}\Big)~.
\ee
Similarly,
\[ \sum \lambda_i^2 f_i=\frac{f}{n-l}
     \Big(n H_1 -\frac{\sum \lambda_i^2 H_{l-1;i}}{H_l}\Big)~.\]
Using
\[\frac{\sum \lambda_i^2 H_{l-1;i}}{H_l} = n H_1 -(n-l)\frac{H_{l+1}}{H_l}~,\]
we find
\be \label{eq5.50}
\sum \lambda_i^2 f_i=f \frac{H_{l+1}}{H_l}~.
\ee
Combining \eqref{eq5.40} and \eqref{eq5.50} gives
\be
\label{eq5.60}
\sum f_i - \sum  \lambda_i^2 f_i
   = \frac{f}{n-l}\Big(n \frac{H_{n-1}}{H_n}
     - l \frac{H_{l-1}}{H_l}-(n-l)\frac{H_{l+1}}{H_l}\Big).
\ee
By the  Newton-Maclaurin inequalities,
\[ \frac{H_{n-1}}{H_n} \geq  \frac{H_{l-1}}{H_l}\]
with equality if and only all the $\lambda_i$ are equal. Hence,
\be \label{eq5.65}
\sum f_i - \sum  \lambda_i^2 f_i
    \geq f \Big(\frac{H_{n-1}}{H_n}-\frac{H_{l+1}}{H_l}\Big)~.
\ee
Therefore if $l-1$, we find
\be \label{eq5.70}
\sum f_i - \sum  \lambda_i^2 f_i \geq 1-f^2 > 0
   \,\,\mbox{in  $K_n^{+} \cap \{0<f<1\}$}
\ee
while if $l-2$ we similarly find
\be \label{eq5.80}
\sum f_i - \sum  \lambda_i^2 f_i \geq \frac{H_{n-1}}{H_{n}} f(1-f^2)
  \geq 1-f^2 >0
\,\,\mbox{in  $K_n^{+} \cap \{0<f<1\}$}.
\ee
\end{example}

We now complete the {\bf proof of Theorem \ref{th6.0}}.
\begin{proof}
a. For $\Gamma$ smooth and $f(\kappa)$ satisfying the conditions of
Theorem~\ref{th5.10}, we have by Theorem \ref{th2.0} and Theorem~\ref{th5.10}
a smooth ``monotone decreasing'' family of smooth solutions
$\Sigma^{\sigma}=\mbox{graph \,\,$u^{\sigma}(x),\, x\in \Omega$}$ of
\eqref{eq1.10}, \eqref{eq1.20}. That is,  if  $\sigma_1< \sigma_2$, then
$u^{\sigma_1}>u^{\sigma_2}$ in $\Omega$.
Note also that if $\Omega \subset B_{\delta}(0)$ then
\[ u^{\sigma}< v^{\sigma}(x) :=
     -\frac{\sigma \delta}{\sqrt{1-\sigma^2}}
     +\sqrt{\frac{\delta^2}{1-\sigma^2}-|x|^2} \,\, \mbox{in \,\,$\Omega$},\]
where $v^{\sigma}(x)$ corresponds to the equidistant sphere solution of
$f(\kappa)=\sigma$, which is a graph over $B_{\delta}(0)$.
As $\sigma \goto 1,\, v(x) \goto 0$ uniformly and so the same holds for
$u^{\sigma}(x)$.\\

We claim  that as $\sigma \goto 0,\,\Sigma^{\sigma}$ tends to the component
$S$ of $\partial \mathcal{CH}(\Gamma)$ that is a graph over $\Omega$.
To see this note that $\Sigma^{\sigma}$ lies below $S$ but also eventually
lies above any smooth strictly locally convex hypersurface $S^{\prime}$ by
Theorem~\ref{th5.10}. \\

This completes the proof of Theorem \ref{th6.0} part a.
In order to prove part b, it suffices by a standard approximation argument,
to show that the graph solutions of $f(\kappa)=\sigma$ have uniformly bounded
principal curvatures on compact subdomains of $\Omega$, independent
of the smoothness of $\Gamma$. We carry this out for the special curvature
quotients $f=\frac{H_n}{H_{n-1}}$ in Lemma~\ref{lem5.30} below,
 thus completing the proof of part b.
\end{proof}

\begin{lemma}
\label{lem5.30}
Let $\Sigma=\mbox{\{graph $u(x): x\in \Omega$}\}$ be the unique
strictly locally convex solution
of $\frac{H_n}{H_{n-1}}(\kappa)=\sigma \in (0,1)$. For any compact subdomain
$\Omega^{\prime} \subset \subset \Omega$, let
$\Sigma^{\prime}=\mbox{\{graph $u(x): x\in \Omega^{\prime}$}\}$. Then,
\[ \max_{\vx \in \Sigma^{\prime}}  \kappa_{\max}  \leq C,\]
where C depends only on $\sigma$ and the (Euclidean) distance from
$\Omega^{\prime}$ to $\partial \Omega$.
\end{lemma}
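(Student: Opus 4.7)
The plan is to adapt the maximum principle of Theorem~\ref{th4.10} to a localized setting by multiplying its auxiliary function by a compactly supported cutoff, and to exploit the very special algebraic structure of the curvature quotient $f = H_n/H_{n-1}$ to close the resulting localized estimate. Fix an intermediate domain $\Omega''$ with $\Omega' \subset\subset \Omega'' \subset\subset \Omega$. Since $\Sigma$ is a strictly locally convex graph of $u$ over $\Omega$ with $u \to 0$ on $\partial\Omega$, the convexity of $u^2+|x|^2$ (Theorem~\ref{th1.0}) together with comparison against equidistant-sphere barriers yields $u \geq c_1 > 0$ on $\ol{\Omega''}$, for a constant $c_1$ depending only on $\sigma$ and $\dist(\Omega',\partial\Omega)$. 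Proposition~\ref{prop3.2} then gives $\nu^{n+1} \geq 2a > 0$ on $\ol{\Omega''}$ with the same dependence, playing the role of hypothesis~\eqref{eq4.30}.

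Next, pick a nonnegative cutoff $\phi \in C^2_c(\Omega'')$ with $\phi \equiv 1$ on $\Omega'$, chosen (as a sufficiently high power of a standard bump) so that $|\nabla\phi|^2/\phi$ and $|\nabla^2\phi|$ are uniformly bounded by a constant depending only on $\dist(\Omega',\partial\Omega)$. Consider the auxiliary function
\[
 W \,=\, \phi \,\cdot\, \frac{\kappa_{\max}}{\nu^{n+1}-a}
\]
on the portion of $\Sigma$ above $\ol{\Omega''}$. Its positive maximum must be attained at some interior point $\vx_0 \in \Omega''$ (since $W$ vanishes on $\partial\Omega''$), at which we choose a local orthonormal frame diagonalizing $h_{ij}$ with $h_{11} = \kappa_1 = \kappa_{\max}(\vx_0) > 0$.

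Running the computation of Theorem~\ref{th4.10} at $\vx_0$, using Lemmas~\ref{lem4.20} and~\ref{lem4.30} together with the Andrews--Gerhardt inequality~\eqref{eq4.80}, yields an analogue of~\eqref{eq4.120} augmented by cutoff perturbation terms that, after invoking the bounds on $\phi$, are controlled by $C\kappa_1\sum f_i$ with $C=C(\dist(\Omega',\partial\Omega))$. The key input is then the very special structure of $f = H_n/H_{n-1} = n/\sum(1/\lambda_i)$: from Example~\ref{ex5.30} (formulas~\eqref{eq5.40} and~\eqref{eq5.50}), one reads off the identities
\[
 \sum \lambda_i^2 f_i \,=\, f^2 \,=\, \sigma^2, \qquad 1 \,\leq\, \sum f_i \,\leq\, n.
\]
Substituting these into the critical inequality, every ``bad'' term is at most linear in $\kappa_1$ with coefficient controlled by $\sigma$ and $\dist(\Omega',\partial\Omega)$, while the positive term $\sigma\kappa_1^2$ coming from the $-1$ sectional curvature of $\bfH^{n+1}$ (via~\eqref{eq4.20}) is quadratic in $\kappa_1$. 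Hence $\sigma\kappa_1^2 \leq C\kappa_1 + O(1)$, forcing $\kappa_1(\vx_0) \leq C$, and since $\phi \equiv 1$ on $\Omega'$, this is the desired bound.

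The main obstacle is that the localization produces cutoff perturbation terms scaling with $\sum f_i$, and the original ``bad'' term in Lemma~\ref{lem4.20} is $\kappa_1 \sum \kappa_i^2 f_i$. For a generic curvature function neither $\sum f_i$ nor $\sum \kappa_i^2 f_i$ is bounded above by a constant, and the estimate fails to close. The identities $\sum \lambda_i^2 f_i \equiv \sigma^2$ and $\sum f_i \leq n$ are both very special to $f = H_n/H_{n-1}$, and together force every bad contribution to be at most linear in $\kappa_1$ -- which is the only way the quadratic $\sigma\kappa_1^2$ can dominate. This precisely explains why the analogous interior estimate is left open for $f = H_n/H_{n-2}$ in the remark following Theorem~\ref{th6.0}.
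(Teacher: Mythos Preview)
Your approach is correct in spirit and identifies the crucial algebraic input---namely that for $f=H_n/H_{n-1}$ one has $\sum \kappa_i^2 f_i=\sigma^2$ and $1\le \sum f_i\le n$, so every ``bad'' term is at most linear in $\kappa_1$ against the quadratic $\sigma\kappa_1^2$.  However, the paper's route is markedly simpler.  Instead of keeping the factor $(\nu^{n+1}-a)^{-1}$ and the Andrews--Gerhardt machinery from Theorem~\ref{th4.10}, the paper drops both: it takes the test function $\phi\,\kappa_{\max}$ with the \emph{intrinsic} cutoff $\phi=(u-\theta)_+$, applies $F^{ii}$ to the second derivative condition, and uses only Lemma~\ref{lem4.20} together with the formula $F^{ij}\nabla_{ij}u=\tfrac{2}{u}F^{ij}u_iu_j+\sigma u\nu^{n+1}-u\sum f_i$ (equation~\eqref{eq5.90}) to arrive directly at
\[
0\ \ge\ -\frac{u\kappa_1}{\phi}\sum f_i+\sigma(1+\kappa_1^2)-\kappa_1\sum f_i-\kappa_1\sum\kappa_i^2 f_i,
\]
which, after substituting $\sum\kappa_i^2 f_i=\sigma^2$ and $\sum f_i\le n$, immediately yields $\phi\kappa_1\le C$.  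The point is that the whole purpose of the $(\nu^{n+1}-a)^{-1}$ factor in Theorem~\ref{th4.10} was to tame $\sum\kappa_i^2 f_i$ via \eqref{eq4.80}; once that sum equals $\sigma^2$, neither the factor nor the Andrews--Gerhardt inequality is needed.  Your scheme works, but it carries extra baggage (establishing a lower bound on $\nu^{n+1}$ on $\Omega''$, handling $L\phi$ for an extrinsic bump function, and the $J/L$ splitting), whereas the paper's choice of the height function as cutoff is geometrically natural and makes $L\phi$ explicit in one line.
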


\begin{proof}
Fix a small constant $\theta \in (0,1)$ and set $\phi=(u-\theta)_{+}$.
Recall from Lemma~\ref{lem2.10},
\be
\label{eq5.90}
Lu = \frac{2}{u}F^{ij}u_i u_j + \sigma u \nu^{n+1 }- u \sum f_i
\ee
We modify the argument of section~\ref{sec4} by setting
\be
\label{eq5.100}
 M_0 = \max_{\vx \in \Sigma}  \phi \,  \kappa_{\max} (x)~.
 \ee
 Then $M_0 > 0$ is attained at an interior point $\vx_0 \in \Sigma$.
Let $\tau_1, \ldots, \tau_n$ be a local orthonormal frame
around $\vx_0$ such that $h_{ij} (\vx_0) = \kappa_i \delta_{ij}$,
where $\kappa_1, \ldots, \kappa_n$ are the principal curvatures of $\Sigma$
at $\vx_0$. We may assume
$\kappa_1 = \kappa_{\max} (\vx_0)$.
Thus, at $\vx_0,\,  \log{\phi} +\log{h_{11}}$
 has a local maximum and so,
\begin{eqnarray}
\label{eq5.110}
\frac{\phi_i}{\phi}+\frac{h_{11i}}{h_{11}} = 0, \\
\label{eq5.120}
 \frac{\phi_{ii}}{\phi} + \frac{h_{11ii}}{h_{11}} \leq 0.
\end{eqnarray}

 As in section \ref{sec4} we obtain from \eqref{eq5.120} and \eqref{eq5.90} that
 at $\vx_0$,
 \be \label{eq5.130}
 0 \geq - \frac{u \kappa_1}{\phi} \sum f_i
+\sigma (1 + \kappa_1^2 )- \kappa_1 \sum f_i - \kappa_1 \sum \kappa_i^2 f_i.
 \ee
 From the calculations of Example \ref{ex5.30},
 \be
\label{eq5.140}
  1 \leq \sum f_i \leq n,\;\; \sum \kappa_i^2 f_i=\sigma^2\,.
  \ee
Hence from \eqref{eq5.130} and \eqref{eq5.140} we obtain
$\phi \kappa_1 \leq C$. Choosing $\theta$ so small that $u\geq 2\theta$ on
$\Omega^{\prime}$ completes the proof.
\end{proof}

\bigskip

\end{document}